\newtheorem{theorem}{Theorem}[section]
\newtheorem{lemma}[theorem]{Lemma}
\newtheorem{proposition}[theorem]{Proposition}
\newtheorem{corollary}[theorem]{Corollary}
\newtheorem{definition}[theorem]{Definition}
\theoremstyle{remark}
\newtheorem{remark}{Remark}
\newcommand{\R}{\mathbb{R}}
\newcommand{\N}{\mathbb{N}}
\newcommand{\Z}{\mathbb{Z}}
\DeclareMathOperator{\rank}{rank}
\DeclareMathOperator{\nrank}{rank_{+}}
\DeclareMathOperator{\xc}{xc}
\DeclareMathOperator{\stab}{STAB}
\DeclareMathOperator{\conv}{conv}
\newcommand{\MM}{{\mathcal M}}
\newcommand{\MMR}{{\mathcal R}}
\begin{document}

\title[Extension Complexity of Stable Set Polytopes for Perfect Graphs
]{On the Linear Extension Complexity of Stable Set Polytopes for Perfect Graphs
}
\thanks{This work is dedicated to the memory of Michel Deza, with gratitude for his stimulating role during the early career of the second author. }

\author{Hao Hu}
\address{Tilburg University, P.O. Box 90153, 5000 LE Tilburg, The Netherlands.}
\email{h.hu@tilburguniversity.edu}

\author{Monique Laurent}
\address{Centrum Wiskunde \& Informatica (CWI), Amsterdam and Tilburg University;
CWI, Science Park 123, Postbus 94079, 1090 GB Amsterdam, The Netherlands.}
\email{monique@cwi.nl}

\date{\today}

\maketitle


\begin{abstract}
We study the linear extension complexity of stable set polytopes of perfect graphs. 
We make use of known structural results permitting to decompose perfect graphs into basic perfect graphs by means of two graph operations: 2-join and skew partitions.  Exploiting the link between extension complexity and the nonnegative rank of an associated slack matrix, we investigate the behaviour of the extension complexity under these graph operations. We  show bounds for the extension complexity of the stable set polytope of a perfect graph $G$ depending linearly on the size of $G$   
and involving the depth of a decomposition tree of $G$ in terms of basic perfect graphs.

\end{abstract}

\section{Introduction} \label{CH_PP:introduction}

The polyhedral approach is a  classical, fundamental approach to solve   combinatorial optimization problems,  which aims to represent the convex hull of feasible solutions by linear inequalities and then to use linear programming to solve  the optimization problem. One of the major difficulties is that the explicit linear description of the corresponding polytope  may need an exponentially large number of linear inequalities (facets) in its natural space. This is  the case, e.g., for spanning tree polytopes or for matching polytopes while the corresponding combinatorial optimization problems are  in fact polynomially  solvable.
A widely investigated approach consists in searching for a  compact extension (aka extended formulation) of a given polytope $P$, i.e., searching for another polytope $Q$ lying in a higher dimensional space, which projects  onto $P$ and has less facets than $P$.
The smallest number of facets of such an extension is known as the {\em extension complexity} of $P$,  investigated in the seminal work of Yannakakis \cite{yannakakis1988expressing}.
The interest in this parameter lies in the fact that linear  optimization over $P$ amounts to linear optimization over $Q$.

Understanding which classes of polytopes have small extension complexity  has received considerable attention recently (see, e.g., the surveys \cite{CCZ10,Kaibel}). Well known classes admitting polynomial extension complexity include $\ell_1$-balls, spanning tree polytopes \cite{M,W}, permutahedra \cite{Goemans}. On the negative side Rothvoss \cite{Rothvoss12} showed the existence of $0/1$ polytopes whose extension complexity grows exponentially with the dimension and Fiorini et al. \cite{fiorini2015exponential}
show that this is the case for classes of combinatorial poytopes including cut, traveling salesman and stable set polytopes of graphs. In particular, a class of graphs on $n$ vertices is constructed in \cite{fiorini2015exponential} whose stable set polytope has  extension complexity at least $2^{\Omega(\sqrt n)}$.
While the latter polytopes correspond to hard cominatorial optimization problems, Rothvoss \cite{rothvoss2014matching} shows that  the  matching polytope of the complete graph $K_n$ has exponentially large extension complexity $2^{\Omega( n)}$, answering a long-standing open question of Yannankakis \cite{yannakakis1988expressing}.

 Yannakakis \cite{yannakakis1988expressing} investigated the extension complexity of  stable set polytopes for   perfect graphs. 
While  their linear inequality description is explicitly known (given by nonnegativity and clique constraints \cite{chvatal1985star}) it may involve exponentially many inequalites since perfect graphs may have exponentially many maximal cliques. This is the case, for instance, for  double-split graphs (see Section \ref{secdoublesplit} below). 
Using a reformulation of the extension complexity in terms of the nonnegative rank of the so-called  slack matrix and a link to communication complexity Yannakakis  \cite{yannakakis1988expressing} proved that the extension complexity for a perfect graph on $n$ vertices is  in the order $n^{O(\log n)}$.
It is an open problem whether this is the right regime or whether the extension complexity can be polynomially  bounded in terms of $n$.
This question is even more puzzling in view of the fact that compact {\em semidefinite} extensions (instead of linear ones) do exist. 
Indeed the 
 stable set polytope  of a perfect graph on $n$ vertices can be realized as projection of an affine section of the cone of $(n+1)\times (n+1)$ positive semidefinite matrices (using the so-called theta body, see \cite{GLS}). In fact  the only known polynomial-time algorithms for the maximum stable set problem in perfect graphs are based on semidefinite programming, and it is  open whether efficient  algorithms exist that are based on linear  programming.
We note that it has also been shown in \cite{LRS}  that compact semidefinite extensions do not exist for cut, traveling salesman and stable set polytopes for general graphs.

\medskip
In this paper we revisit the problem of finding upper bounds for the extension complexity of stable set polytopes for perfect graphs.
We make use of the recent decomposition results  for perfect graphs by Chudnovsky et al.\,\cite{chudnovsky2006strong},
who proved that any perfect graph can be decomposed into {\em basic perfect graphs} by means of two graph operations (special 2-joins and skew partitions). There are five classes of basic perfect graphs: bipartite graphs and their complements, line graphs of bipartite graphs and their complements, and double-split graphs.
As a second crucial ingredient we use the fundamental link established by Yannakakis \cite{yannakakis1988expressing} between the extension complexity of a polytope and the nonnegative rank of its slack matrix. We investigate how the nonnegative rank of the slack matrix behaves under the graph operation used for decomposing perfect graphs. This allows to upper bound the extension complexity of the stable set polytope of a perfect graph in terms of its number $n$ of vertices and the depth of a decomposition tree. As an application the extension complexity is polynomial for the class of perfect graphs admitting a decomposition tree whose depth is logarithmic in $n$.

\medskip
The paper is organized as follows. In Section \ref{secpreliminaries}  we recall definitions and preliminary results that we need in the paper. First we consider extended formulations and  slack matrices  and we recall the result of Yannakakis \cite{yannakakis1988expressing} expressing the extension complexity in terms of the nonnegative rank of  the slack matrix. 
After that we consider perfect graphs and their stable set polytopes and recall the structural decomposition result of  \citet{chudnovsky2006strong} for  perfect graphs. 
In Section \ref{secbasic} we consider the basic perfect graphs and show that their extension complexity is (at most)  linear in the number of vertices and edges.
In Section \ref{secoperations} we consider the behaviour of the extension complexity of the stable set polytope under several graph operations: graph substitution, 2-joins and skew partitions. Finally in  Section \ref{secappli} we  use these results to upper bound the extension complexity for arbitrary perfect graphs in terms of the number of vertices and edges and of the depth of a decomposition tree into basic perfect graphs.


\section{Preliminaries}\label{secpreliminaries}

Here we group some definitions and preliminary results that we will need in the rest of the paper.
In Section \ref{secextendedformulation} we consider extended formulations of polytopes and recall the fundamental result of Yannakakis \cite{yannakakis1988expressing} which characterizes the extension complexity of a polytope in terms of the nonnegative rank of its slack matrix. Then in
 Section \ref{secperfectgraph} we recall  results about perfect graphs and their stable set polytopes.
 
Throughout we use the following  notation. 
We let $\conv(V)$ denote the convex hull of a set $V\subseteq \R^d$. For an integer $n\in \N$, we set $[n]=\{1,\cdots,n\}$. Given a subset $S\subseteq [n]$, $\chi^S\in\{0,1\}^n$ denotes its characteristic vector. 
 Given a graph $G=(V,E)$ and a subset $S\subseteq V$, $G[S]$ denotes the subgraph of $G$ induced by $S$, with vertex set $S$ and edges all pairs $\{u,v\}\in E$ with $u,v\in S$.
 
\subsection{Extended formulation, extension complexity and slack matrix} \label{secextendedformulation}
An extended formulation of a polytope is a linear system describing this polytope possibly using additional variables. The interest of extended formulations is due to the fact that one can sometimes reduce the number of inequalities needed to define the polytope when additional variables are  allowed.

\begin{definition}[Extended formulation]
	Let $P\subseteq \R^{d}$ be a polytope. The linear system 
	\begin{equation}
	Ex + Ft = g,  \; \hat{E}x + \hat{F}t \leq \hat{g}, \label{EF}
	\end{equation}
	in the variables $(x,t) \in \R^{d}\times\R^q$, is called an {\em extended formulation} of $P$  if the following equality holds:
	\begin{equation*}\label{eqEF}
	P=\{x\in\R^d: \exists t\in \R^q \text{ s.t. } Ex + Ft = g,  \; \hat{E}x + \hat{F}t \leq \hat{g}\}.
	\end{equation*}	
Here, the matrices $E, \hat E$ have $d$ columns, the matrices $F, \hat F$ have $q$ columns,	 the additional variable $t$ is called the {\em lifting variable}, and the {\em size} of the extended formulation is defined as the number of inequalities in the system (\ref{EF}) (i.e., the number of rows of the matrices $\hat E,\hat F$).
The extended formulation is said to be in {\em slack form} if the only inequalities are nonnegativity conditions on the lifting variable $t$, i.e., if it is of the form:
	\begin{equation}
	Ex + Ft = g, \; t \geq 0 \label{EFslack}
	\end{equation}
	and then its size is the dimension of the variable $t$.
\end{definition}

\begin{remark}\label{EFlemma}
The linear system (\ref{EF}) is an extended formulation of $P$ if and only if  for every vertex $v$ of $P$ there exists a lifting variable $t_v$ such that the vector $(v,t_v)$ satisfies (\ref{EF}). 
\end{remark}


\begin{definition}[Extension complexity]
	Let $P\subseteq \R^{d}$ be a polytope. A polytope $Q \subseteq \R^{k}$ is called an {\em extension} of $P$ if there exists a linear mapping $\pi: \R^{k} \rightarrow \R^{d}$ such that  $P=\pi(Q)$. The {\em size} of the extension $Q$, denoted by size$(Q)$, is defined as the number of facets of $Q$. Then the  {\em extension complexity} of $P$ is the parameter $\xc(P)$ defined as 
	$$\xc(P) =\min \{ \text{\rm size}(Q) : Q \text{ is an extension of } P  \}.$$\end{definition}


As we recall in Theorem \ref{FactorizationTheorem} below,
 extended formulations and extensions are in fact equivalent notions and the extension complexity of $P$ can be computed via the nonnegative rank of its slack matrix.  
 
\begin{definition}[Slack matrix] \label{DEF:Slackmatrix}
	Let $P \subseteq \R^{d}$ be a polytope. Consider a linear system $Ax\leq b$ describing $P$, i.e.,  $P = \{ x \in \R^{d} : Ax\leq b \}$ with $A\in \R^{m \times d}$ and $b \in \R^{m}$, and a set  $V = \{ v_{1}, \ldots ,v_{n} \} \subseteq P$ containing all the vertices of $P$, i.e., $P=\conv(V)$. Then the $m \times n $   matrix $S = (S_{i,j})$ with entries 
	$$S_{i,j} = b_{i} - A_{i}^{T}v_{j} \ \ \text{ for } i\in [m],\  j\in [n]$$ 
	is called a {\em slack matrix} of $P$, said to be   induced by $V$ and the linear system $Ax\leq b$. 
\end{definition}

Note that choosing different point sets and linear systems  in Definition $\ref{DEF:Slackmatrix}$ will induce  different slack matrices. However,  it will follow from  Theorem \ref{FactorizationTheorem} below that  they all    have the same nonnegative rank. 
So we may speak of {\em the}  slack matrix of $P$ without referring explicitly to the selected point set and linear system if there is no ambiguity.

\begin{definition}[Nonnegative rank]
	The {\em nonnegative rank} of a nonnegative matrix $S \in  \R_{+}^{m \times n}$ is defined as
	$$\nrank(S) = \min \{ r : \exists \; T \in  \R_{+}^{m \times r}\; \; \exists \; U \in  \R_{+}^{r \times n}  \text{ such that } S= TU \}.$$
	In what follows, a decomposition of the form $S= TU$ as above  is called a {\em nonnegative decomposition with intermediate dimension $r$}.
\end{definition}

We  refer, e.g.,  to \cite{Gillis} for an overview of applications of the nonnegative rank and for further references.
The following are easy  well-known properties of the nonnegative rank, which we will extensively use later.

\begin{lemma} \label{ranklemmaBlock}
	\begin{enumerate}[topsep=0pt,itemsep=-1ex,partopsep=1ex,parsep=2ex,label=(\roman*)]
		\item For  $S\in \R_+^{m\times n}$, we have \ $\nrank (S)  = \nrank (S^{T})\le \min\{m,n\}$.
		\item For   $S_1\in \R_+^{m\times n_1},$  $S_2\in\R_+^{m\times n_2}$, we have \ $\nrank (S_1 \ S_2) 
		 \leq \nrank(S_{1}) + \nrank(S_{2})$.
		
	\end{enumerate}
\end{lemma}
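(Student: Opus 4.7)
The plan is to handle both items by directly manipulating nonnegative factorizations; no obstacles are expected since both statements reduce to bookkeeping on matrix products.

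For part (i), I would first show $\nrank(S) = \nrank(S^T)$ by transposing a factorization. Concretely, if $S = TU$ with $T \in \R_+^{m \times r}$ and $U \in \R_+^{r \times n}$, then $S^T = U^T T^T$ is a nonnegative factorization with intermediate dimension $r$, giving $\nrank(S^T) \le \nrank(S)$; the reverse inequality follows by symmetry applied to $S^T$. For the bound $\nrank(S) \le \min\{m,n\}$, I would exhibit the trivial factorizations $S = I_m \cdot S$ (with $I_m$ the $m\times m$ identity) and $S = S \cdot I_n$, whose intermediate dimensions are $m$ and $n$ respectively; both factors are nonnegative since $S$ itself is nonnegative.

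For part (ii), I would take optimal nonnegative factorizations $S_1 = T_1 U_1$ and $S_2 = T_2 U_2$ with intermediate dimensions $r_1 = \nrank(S_1)$ and $r_2 = \nrank(S_2)$. Then the block decomposition
\begin{equation*}
(S_1 \ S_2) = (T_1 \ T_2) \begin{pmatrix} U_1 & 0 \\ 0 & U_2 \end{pmatrix}
\end{equation*}
is a nonnegative factorization of $(S_1 \ S_2)$ with intermediate dimension $r_1 + r_2$, yielding $\nrank(S_1\ S_2) \le \nrank(S_1) + \nrank(S_2)$. Nonnegativity of the two factors is immediate from the nonnegativity of $T_1, T_2, U_1, U_2$ and of the zero blocks.

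No step is genuinely delicate here; the only thing one has to be careful about is that in the block factorization the zero blocks have appropriate dimensions ($r_1 \times n_2$ and $r_2 \times n_1$) so the product is well-defined and indeed equals $(S_1\ S_2)$.
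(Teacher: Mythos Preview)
Your proof is correct and entirely standard. The paper itself states this lemma as ``easy well-known properties'' without proof; the commented-out proof fragment in the source (for the transposed, vertically-stacked version) uses exactly the same block-factorization idea you give in part~(ii), so your approach matches the intended one.
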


We can now formulate  the following  result of Yannakakis \cite{yannakakis1988expressing}, which  establishes a fundamental link between extended formulations, the extension complexity of a polytope and the nonnegative rank of its slack matrix. We also refer, e.g., to \cite{GPT} for a detailed exposition in the more general setting of conic factorizations.

\begin{theorem} \emph{\cite{yannakakis1988expressing}} \label{FactorizationTheorem}
	Let $P = \{x \in \R^{d} : Ax \leq b \}$ be a polytope whose dimension is at least one, where $A \in \R^{m \times d}$ and $b \in \R^{m}$,  let $V = \{ v_{1},\ldots,v_{n} \}$ be a subset of $P$ containing the set  of vertices of $P$, and let $S \in \R^{m \times n}$ be the induced slack matrix. Let $r$ be a positive integer. The following assertions are equivalent:
	\begin{enumerate} 
		\item[(i)] $\nrank(S) \leq r$;
		\item[(ii)]  $P$ has an extension of size at most $r$;
		\item[(iii)]  $P$ has an extended formulation in slack form of size at most $r$;
		\item[(iv)]  $P$ has an extended formulation of size at most $r$. 
	\end{enumerate}
\end{theorem}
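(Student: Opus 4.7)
The plan is to establish the cyclic chain of implications $(i) \Rightarrow (iii) \Rightarrow (iv) \Rightarrow (ii) \Rightarrow (i)$. The implication $(iii) \Rightarrow (iv)$ is immediate, as a slack-form formulation is a special case of an extended formulation with the same size. For $(iv) \Rightarrow (ii)$, the polyhedron defined by the extended formulation sits in $\R^{d+q}$, projects onto $P$ via $(x,t) \mapsto x$, and has at most $r$ facets, one per inequality in the system; if it is unbounded, one can bound it (e.g.\ by intersecting with a sufficiently large box, which does not affect the projection onto the polytope $P$) to obtain an extension of size at most $r$.

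For $(i) \Rightarrow (iii)$, starting from a nonnegative decomposition $S = TU$ with $T \in \R_+^{m \times r}$ and $U \in \R_+^{r \times n}$, I would propose the slack-form extended formulation
\[ Ax + Tt = b, \quad t \geq 0, \]
with lifting variable $t \in \R^r$. Two things need to be verified. First, each $v_j \in V$ lifts to $(v_j, U_{\cdot, j})$, since $U_{\cdot, j} \geq 0$ and $A v_j + T U_{\cdot, j} = A v_j + S_{\cdot, j} = b$, so by Remark \ref{EFlemma} the system is an extended formulation whose projection contains $P$. Second, any feasible $(x, t)$ gives $Ax = b - Tt \leq b$, hence $x \in P$. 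The size equals $\dim(t) = r$.

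The substantial step is $(ii) \Rightarrow (i)$. Given an extension $Q = \{y \in \R^k : By \leq c\}$ of size $r$ with linear projection $\pi$ onto $P$, I would pick for each $v_j \in V$ a preimage $y_j \in Q$ with $\pi(y_j) = v_j$, and define $U \in \R_+^{r \times n}$ by $U_{k, j} := c_k - B_k y_j \geq 0$. The goal is to produce $T \in \R_+^{m \times r}$ with $TU = S$. Row by row, the existence of a nonnegative row $T_{i, \cdot}$ with $T_{i, \cdot} U = S_{i, \cdot}$ reduces by Farkas' lemma to the condition that for every $z \in \R^n$ with $Uz \geq 0$, one has $S_{i, \cdot} z \geq 0$. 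Writing $\alpha := \sum_j z_j$ and $\bar y := \sum_j z_j y_j$, the hypothesis becomes $B \bar y \leq \alpha c$, and a direct calculation gives $S_{i, \cdot} z = b_i \alpha - A_i^T \pi(\bar y)$. If $\alpha > 0$, then $\bar y / \alpha \in Q$, so $\pi(\bar y / \alpha) \in P$ satisfies $A_i^T \pi(\bar y / \alpha) \leq b_i$, giving $S_{i, \cdot} z \geq 0$. If $\alpha = 0$, then $\bar y$ lies in the recession cone of $Q$, which maps into the recession cone $\{0\}$ of the bounded polytope $P$, so $\pi(\bar y) = 0$ and $S_{i, \cdot} z = 0$. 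The case $\alpha < 0$ is excluded: for any $y_0 \in Q$, $\bar y - \alpha y_0$ again lies in the recession cone of $Q$, forcing $\pi(\bar y) = \alpha \pi(y_0)$; since $y_0$ is arbitrary in $Q$ and $\dim P \geq 1$, the value $\alpha \pi(y_0)$ cannot be constant across $P$ unless $\alpha = 0$, a contradiction.

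The main obstacle I anticipate is precisely this final Farkas analysis, especially ruling out the case $\alpha < 0$, which relies crucially on the boundedness of $P$ (via its recession cone) together with the hypothesis $\dim P \geq 1$. A secondary technicality is handling the potentially unbounded polyhedron arising in $(iv) \Rightarrow (ii)$, which is routine but must be stated carefully to respect the definition of an extension as a polytope.
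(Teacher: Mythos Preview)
The paper does not give its own proof of this theorem; it is quoted from Yannakakis with a pointer to \cite{GPT} for a detailed treatment, so there is no argument in the paper to compare against. Your overall strategy is the standard one, and the two substantive steps $(i)\Rightarrow(iii)$ and $(ii)\Rightarrow(i)$ via Farkas are carried out correctly, including the case analysis on the sign of $\alpha$ using $\dim P\ge 1$.

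There is, however, a real gap in $(iv)\Rightarrow(ii)$: intersecting the polyhedron $Q'$ defined by the extended formulation with a large box may create up to $2(d+q)$ new facets, so the resulting polytope need not have size at most $r$; this is not the routine technicality you suggest. The clean repair is to skip $(ii)$ on this leg and prove $(iv)\Rightarrow(i)$ directly, rerunning your Farkas argument with the (possibly unbounded) polyhedron $Q'$ in place of the polytope $Q$. Nothing changes: in the cases $\alpha=0$ and $\alpha<0$ you only used that the recession cone of the extension \emph{projects} into the recession cone $\{0\}$ of $P$, not that it is itself trivial, and you already phrased the $\alpha<0$ case in exactly this way. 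To then recover $(ii)$ from the rest, note that in your $(i)\Rightarrow(iii)$ construction one may assume $T$ has no zero column (otherwise delete that column and decrease $r$); since $T\ge 0$, the conditions $Tt=0$, $t\ge 0$ then force $t=0$, and combined with Lemma~\ref{sublemma1} this shows the slack-form polyhedron $\{(x,t):Ax+Tt=b,\ t\ge 0\}$ is bounded, hence itself an extension of size at most $r$.
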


Hence the extension complexity of $P$ can be defined by any of the following formulas:
$$\begin{array}{ll}
\xc(P) & =\  \min\{r: P \text{ has an extension of size } r\}\\
& =\  \min\{r: P \text{ has an extended formulation (in slack form) of size } r\}\\
&=\  \rank_+(S) \ \text{  for any slack matrix of } P.
\end{array}
$$
In particular the extension complexity of a $d$-dimensional polytope is at least $d+1$.

\subsection{Stable set polytopes  and perfect graphs}\label{secperfectgraph}

Given a graph $G=(V,E)$, a {\em stable set} of $G$ is a subset $I\subseteq V$ where no two elements of $I$ form an edge of $G$.  
The maximum cardinality of a stable set in $G$ is the stability number of $G$, denoted by $\alpha(G)$.   The {\em stable set polytope} $\stab(G)$ of  $G$ is defined as the convex hull of the characteristic vectors of the stable sets of $G$:
\[ \stab(G):= \conv \{ \chi^{I} : I \text{ is stable in }  G \}  \subseteq \R^{ |V|  }. \]

Computing the stability number $\alpha(G)$ is an NP-hard problem and accordingly the full linear  inequality description of  the stable set polytope is not known in general. 
However, for some classes of graphs, there exist efficient algorithms for computing $\alpha(G)$ and an explicit linear inequality description of $\stab(G)$ is known. 
This is the case in particular for the class of perfect graphs, as we now recall.

\newcommand{\oG}{\overline G}

The {\em chromatic number} $\chi(G)$ is the minimum number of colours that are needed to properly color the vertices of $G$, in such a way that two adjacent nodes receive distinct colors. The clique number of $G$ is the largest cardinality of a clique in $G$, denoted by $\omega(G)$. Clearly, $\chi(G)\ge \omega(G)$. 
Following Berge \cite{Berge} a graph $G$ is said to be {\em perfect} if $\chi(G')=\omega(G')$ for each induced subgraph $G'$ of $G$.
A classical result of  Lov\'asz \cite{Lo72a,Lo72b} shows that  $G$ is perfect if and only if its complement $\oG$ is perfect.

Going back to the stable set polytope of $G$, it is clear that for any  clique $C$ of $G$, the following linear inequality $\sum_{v\in C}x_v\le 1$ (called a {\em clique inequality}) is valid for the stable set polytope.
An early result of 
Chv\'atal  \cite{chvatal1975certain} shows that  perfect graphs can be characterized as those graphs for which the clique inequalities together with nonnegativity fully describe the stable set polytope.

\begin{theorem}\emph{\cite{chvatal1975certain}} \label{perfectSTABtheorem}
	A  graph $G=(V,E)$ is perfect if and only if $\stab(G)$ is characterized by the following linear system, in the  variables $x \in \R^{|V|}$:
	\begin{align}  \label{perfectSTABtheorem_equation1}
	\sum_{v\in C} x_{v} & \leq 1& \; \; \forall \; C \text{ maximal clique of } G,  \\
	x_v & \geq 0 & \; \; \forall \;  v \in V.  \label{perfectSTABtheorem_equation2}
	\end{align}
\end{theorem}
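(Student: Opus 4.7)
The statement is an ``if and only if'', so the plan is to split the proof into two directions, both of which will rely on classical tools from perfect graph theory rather than on the slack-matrix machinery of the previous subsection. Throughout, let $P(G)\subseteq \R^{V}$ denote the polytope defined by~\eqref{perfectSTABtheorem_equation1}--\eqref{perfectSTABtheorem_equation2}; note that $P(G[S])$ is given by the same type of description for every $S\subseteq V$, so both sides of the equivalence are hereditary. The inclusion $\stab(G)\subseteq P(G)$ holds unconditionally, since a stable set meets any clique in at most one vertex.

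For the ``only if'' direction ($P(G)=\stab(G)\Rightarrow G$ perfect), I would invoke Lov\'asz's characterization: a graph is perfect precisely when $\alpha(H)\,\omega(H)\ge |V(H)|$ holds for every induced subgraph $H$. Since the hypothesis is hereditary, it is enough to verify $\alpha(G)\,\omega(G)\ge |V|$. The natural witness is the point $x^{*}:=\tfrac{1}{\omega(G)}\mathbf{1}$, which satisfies every clique inequality with room to spare and is nonnegative, so it lies in $P(G)=\stab(G)$. Writing $x^{*}=\sum_{I}\lambda_{I}\chi^{I}$ as a convex combination of stable-set incidence vectors and summing coordinates then yields $|V|/\omega(G)=\sum_{I}\lambda_{I}|I|\le \alpha(G)$, which is exactly what is needed.

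For the ``if'' direction ($G$ perfect $\Rightarrow P(G)\subseteq\stab(G)$), I would compare support functions. Both $\stab(G)$ and $P(G)$ are down-monotone polytopes in $\R_{+}^{V}$, so the inclusion will follow once I show $\max\{w^{T}x:x\in P(G)\}=\alpha_{w}(G):=\max\{w^{T}\chi^{I}:I\text{ stable}\}$ for every $w\in\R_{+}^{V}$; by scaling it suffices to treat nonnegative integer $w$. The key tool is the Lov\'asz replication lemma, which asserts that duplicating a vertex in a perfect graph produces a perfect graph. Applied to $\overline{G}$ (perfect by Lov\'asz's perfect graph theorem) with multiplicity $w$, this yields the weighted identity $\alpha_{w}(G)=\overline{\chi}_{w}(G)$, where $\overline{\chi}_{w}(G)$ denotes the minimum total multiplicity of a family of cliques of $G$ covering each vertex $v$ at least $w_{v}$ times. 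LP duality identifies the fractional relaxation $\overline{\chi}_{w}^{f}(G)$ with $\max\{w^{T}x:x\in P(G)\}$, and the chain
\[\alpha_{w}(G)\le \max_{x\in P(G)} w^{T}x=\overline{\chi}_{w}^{f}(G)\le \overline{\chi}_{w}(G)=\alpha_{w}(G)\]
collapses to equality, finishing the proof.

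The hard part is the weighted integrality statement $\alpha_{w}(G)=\overline{\chi}_{w}(G)$: it is the true content of the ``if'' direction and, in turn, rests on the replication lemma plus Lov\'asz's perfect graph theorem, both of which I would cite rather than reprove. Everything beyond that is LP duality and an elementary support-function comparison.
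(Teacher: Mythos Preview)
Your proposal is a correct outline of the classical Chv\'atal--Fulkerson--Lov\'asz argument, but there is nothing in the paper to compare it against: Theorem~\ref{perfectSTABtheorem} is quoted from \cite{chvatal1975certain} without proof and used purely as a black box to justify the particular slack matrix $S_G$ of Definition~\ref{slackG}. The paper's contribution begins \emph{after} this theorem, so your proof neither agrees nor disagrees with any argument in the text---it simply supplies one where the authors chose to cite.

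That said, your sketch is essentially the standard proof. One small point worth tightening: you assert that the hypothesis $P(G)=\stab(G)$ is hereditary. This is true, but it is not entirely automatic from the fact that both sides ``are given by the same type of description'' on induced subgraphs; you need the observation that intersecting $P(G)$ with the coordinate subspace $\{x_v=0:v\notin S\}$ yields exactly $P(G[S])$ (because every maximal clique of $G[S]$ extends to a maximal clique of $G$, and conversely every restriction $C\cap S$ is dominated by some maximal clique of $G[S]$). With that in place, both directions go through as you describe.
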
 

Hence, when $G$ is a perfect graph, its stable set polytope   $\stab(G)$ can be characterized by the nonnegativity constraints and the maximal clique constraints,
as stated in Theorem  \ref{perfectSTABtheorem}. However,  the number of maximal cliques of $G$ might be exponentially large, and thus this result does not lead directly to  an efficient algorithm for solving the maximum stable set problem in perfect graphs.
As a matter of fact, as of today, the only known efficient algorithm for this problem is based on using semidefinite programming, as shown by Gr\"otschel, Lov\'asz and Schrijver \cite{GLS}.
It is not known whether an efficient linear programming based algorithm exists for solving this problem. This motivates  our work in this paper to investigate the extension complexity of the stable set polytope of perfect graphs. 

\newcommand{\MI}{{\mathcal I}}
\newcommand{\MC}{{\mathcal C}}

Throughout we use the following notation: for a graph $G$, $\MI_G$ denotes the set of stable sets of $G$ and  $\MC_G$ denotes the set of {\em maximal} cliques of $G$.
We will use the following slack matrix for the stable set polytope of perfect graphs.

\begin{definition}\label{slackG}
Given a  graph $G=(V,E)$,  $S_G$ denotes the slack matrix of $\stab(G)$, whose rows are indexed by $V \cup\MC_G$ (corresponding to the nonnegativity constraints (\ref{perfectSTABtheorem_equation2})  and 
the maximal clique constraints (\ref{perfectSTABtheorem_equation1})), and whose columns are indexed by $\MI_G$, with entries
$$\begin{array}{ll}
S_G(v,I) =|\{v\}\cap I|,\ \  S_G(C,I) = 1-|I\cap C| & \text{ for } v\in V, \ C\in \MC_G,\  I\in \MI_G.
\end{array}$$
\end{definition}

From Theorem \ref{FactorizationTheorem}, we know that $\nrank(S_{G}) = \xc(\stab(G))$ when $G$ is perfect.
Hence to study the extension complexity of $\stab(G)$ we need to gain insight on the nonnegative rank of the slack matrix and for this we will use the  structural decomposition result for perfect graphs from 
 \cite{chudnovsky2006strong,thesis,JGT},  that we recall below.


 Berge \cite{Berge} observed that  if a graph $G$ is perfect  then neither $G$ nor $\oG$ contains an  induced cycle of odd length at least $5$,  and he asked whether the converse is true.
This was answered in the affirmative  recently by \citet{chudnovsky2006strong},
a result known as the  {\em strong perfect graph theorem}.
 The  proof of this result in \cite{chudnovsky2006strong} relies on a structural decomposition result for perfect graphs. We need some  definitions to be able to state  this decomposition result.  

\medskip
First we introduce double-split graphs, which form an additional class of basic graphs considered in \cite{chudnovsky2006strong}, next to bipartite graphs, line graphs of bipartite graphs and their complements.

\begin{definition} \emph{\cite{chudnovsky2006strong}}\label{defsplit}
Consider integers $p,q\ge 2$ and sets $L_1,\ldots,L_p\subseteq [q]$. 	A graph $G=(V,E)$ is  a {\em double-split graph}, with parameters $(p,q,L_1,\ldots,L_p)$, if $V$  can be partitioned as $V=V_1\cup V_2$, where  $V_{1}=\{a_1,b_1,\ldots,a_p,b_p\}$,  $V_{2}=\{x_1,y_1,\ldots,x_q,y_q\}$ and
	\begin{enumerate}[topsep=0pt,itemsep=-1ex,partopsep=1ex,parsep=1ex,label=(\roman*)]
		\item[$\bullet$] $G[V_{1}]=(V_1,E_1)$ is a disjoint union of edges, \ 
				 $G[V_{2}]=(V_2,E_2)$ is the complement of a disjoint union of edges,  say
		$$E_{1}  = \{ \{a_{i},b_{i}\} :  i \in [p] \},\ \ E_2=\{\{x_i,y_j\}: i\ne j\in [q]\};$$
		\item[$\bullet$] 
		 The only edges between $V_1$ and $V_2$ are the pairs 
		$\{a_i,x_j\}, \{b_i,y_j\} \text{ for } i\in[p], j\in L_i$, and the pairs
		$\{a_i,y_j\}, \{b_i,x_j\}\ \text{ for } i\in[p], j\in [q]\setminus L_i.$
		
	\end{enumerate}
\end{definition}
Note that double-split graphs may have at the same time exponentially many maximal cliques and exponentially many maximal independent sets (when choosing, e.g., $p=q$).

The decomposition result for perfect graphs needs two graph operations: $2$-joins and skew partitions.

\begin{definition}{\rm \cite{CC}} 
\label{def2join}
	A {2-join} of $G=(V,E)$ is a partition of $V$ into $(V_1,V_2)$  together with  disjoint nonempty subsets $A_{k},B_{k} \subseteq V_{k}$ (for $k = 1,2$) such that 
every vertex of $A_1$ (resp., $B_1$) is adjacent to every vertex of $A_2$ (resp., $B_2$) and there 
are no other edges between $V_1$ and $V_2$.
\end{definition}

\begin{definition}{\rm \cite{chvatal1985star,chudnovsky2006strong}}\label{skew}
	A {\em skew partition} of  $G=(V,E)$ is a partition of $V$ into four nonempty sets $(A_{1},B_{1},A_{2},B_{2})$  such that every vertex in $A_1$ is adjacent to every vertex in $A_2$, and there are  no edges between  vertices in $B_{1}$ and  vertices in $B_{2}$. 

\end{definition}

The following decomposition  result for perfect graphs involves 2-joins and skew partitions with refined properties, namely {\em proper} 2-joins and {\em balanced} skew partitions. As these additional properties will play no role in our treatment, we do not include the exact definitions.

\begin{theorem}{\rm \cite[Statement 1.4]{chudnovsky2006strong}}
	\label{perfectDecomposition}
	Let $G$ be a  perfect graph. Then,   either $G$ belongs to one of the following five basic classes: bipartite graphs and their complements, line graphs of bipartite graphs and their complements, double-split graphs; or one of $G$ or $ \bar{G}$ admits a proper 2-join;
	or 	$G$ admits a balanced skew partition.
\end{theorem}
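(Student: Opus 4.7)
The statement is the structural decomposition theorem of Chudnovsky, Robertson, Seymour and Thomas whose full proof runs to roughly 180 pages; accordingly, the plan below outlines a strategy rather than an actual proof. The first step is to reduce to \emph{Berge graphs}, i.e., graphs containing neither an odd hole (induced cycle of odd length at least $5$) nor an odd antihole. Berge's original observation that an odd hole or antihole already satisfies $\chi > \omega$ shows that every perfect graph is Berge, so it suffices to establish the dichotomy for every Berge graph $G$: either $G$ is basic, or one of $G,\oG$ admits a proper $2$-join, or $G$ admits a balanced skew partition. (Combined with the fact that the basic classes are perfect and that perfection is preserved by the inverses of these decompositions, this yields the strong perfect graph theorem as well.)

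The plan is to proceed by a large case analysis, partitioning the Berge graphs according to which ``obstruction'' configurations they contain. The relevant configurations include a \emph{long prism} (two vertex-disjoint triangles joined by three internally disjoint induced paths, at least one of length $\ge 2$), a \emph{proper wheel} (a hole together with a vertex whose neighborhood on the hole has a prescribed structure), and an \emph{even hole of length $\ge 6$}. For each configuration I would try to show that the adjacencies it imposes, together with the forbidden odd hole/antihole conditions, either force $G$ into one of the five basic classes or else produce a vertex partition realizing a $2$-join in $G$ or $\oG$ or a skew partition in $G$. In the configuration-free case, one argues via the absence of odd holes and antiholes and a careful analysis of short induced paths that $G$ must be bipartite or a line graph of a bipartite graph (or the complement of one).

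The main obstacle, and the reason the original argument is so long, is the extraction of a genuine $2$-join or skew partition from each obstruction: one must identify globally homogeneous sets and twin-like substructures of $G$ from the local data carried by a configuration, and the wheel cases are notoriously intricate. A further subtlety is that the statement requires the $2$-join to be \emph{proper} and the skew partition to be \emph{balanced}, a property not automatic from a crude configuration-based cut; obtaining it typically requires refining the partition by absorbing certain vertices on either side and invoking parity arguments that exploit the Berge hypothesis. I would not attempt to reproduce this case analysis here and would instead defer to the original proof in~\cite{chudnovsky2006strong}.
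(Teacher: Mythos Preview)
Your proposal is appropriate: the paper does not give any proof of this statement at all---it is quoted verbatim as \cite[Statement 1.4]{chudnovsky2006strong} and used as a black box, which is exactly what you conclude by deferring to the original reference. Your outline of the Chudnovsky--Robertson--Seymour--Thomas strategy (reduction to Berge graphs, case analysis on obstructions such as prisms, wheels, and long even holes, and the extraction of proper 2-joins or balanced skew partitions) is a reasonable high-level summary of that proof, and there is nothing to compare it against in the present paper.
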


In this paper we investigate how the extension complexity of the stable set polytope of a perfect graph $G$ can be upper bounded depending on the two decomposition operations (2-joins and skew partitions) that are needed to build $G$ from the basic graph classes.

\section{Extension complexity for basic perfect graphs}\label{secbasic}

In this section we show bounds for the extension complexity of the stable set polytope for the basic classes of perfect graphs.
Recall the definition of the slack matrix $S_G$ introduced in Definition \ref{slackG}.
From Theorem \ref{FactorizationTheorem}, we know that when $G$ is  perfect,  the extension complexity of its stable set polytope  is given by the nonnegative rank of the matrix $S_G$:
$$ \xc(\stab(G))=\nrank(S_{G}).$$
So in order to upper bound $\xc(\stab(G))$ it suffices to upper bound $\nrank(S_G)$.
The following  upper bound follows directly from Lemma \ref{ranklemmaBlock}(i) (since 
$S_G$ has $|V(G)|+|\MC|$ rows).

\begin{lemma}\emph{\cite{yannakakis1988expressing}} \label{CH_PG:SGlemma1}
	Let $G= (V,E)$ be a perfect graph and let $\MC$ denote its set of maximal cliques. Then we have: $\xc(\stab(G)) \leq |V| + |\MC|$.
\end{lemma}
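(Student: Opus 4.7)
The plan is to derive this bound as an immediate consequence of the two preliminary facts just recalled: the equality $\xc(\stab(G))=\nrank(S_G)$ coming from Theorem \ref{FactorizationTheorem}, and the dimensional bound $\nrank(S)\le \min\{m,n\}$ for an $m\times n$ nonnegative matrix from Lemma \ref{ranklemmaBlock}(i). The only thing to verify is that the particular slack matrix $S_G$ chosen in Definition \ref{slackG} is legitimate input for both of these results.

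First I would observe that, since $G$ is perfect, Theorem \ref{perfectSTABtheorem} of Chv\'atal gives an explicit linear inequality description of $\stab(G)$ consisting of the $|V|$ nonnegativity constraints $x_v\ge 0$ ($v\in V$) together with the $|\MC|$ maximal clique inequalities $\sum_{v\in C}x_v\le 1$ ($C\in\MC$). The matrix $S_G$ defined in Definition \ref{slackG} is precisely the slack matrix induced by this description and by the point set $V=\{\chi^I:I\in \MI_G\}$, which contains all vertices of $\stab(G)$ (indeed, every vertex of $\stab(G)$ is the characteristic vector of a stable set). Hence $S_G$ is a slack matrix in the sense of Definition \ref{DEF:Slackmatrix}, so Theorem \ref{FactorizationTheorem} applies and yields the equality $\xc(\stab(G))=\nrank(S_G)$.

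Next I would bound $\nrank(S_G)$ by its number of rows. The matrix $S_G$ has rows indexed by $V\cup \MC$, so it has $|V|+|\MC|$ rows in total. Applying Lemma \ref{ranklemmaBlock}(i) directly gives
\[
\nrank(S_G)\ \le\ |V|+|\MC|.
\]
Combining the two displayed inequalities yields $\xc(\stab(G))\le |V|+|\MC|$, as claimed.

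There is really no obstacle in this argument; the only subtlety worth being explicit about is that the same symbol $S_G$ must simultaneously be a slack matrix in the sense needed for Theorem \ref{FactorizationTheorem} and have the row count in the sense needed for Lemma \ref{ranklemmaBlock}(i), and both are immediate from Definition \ref{slackG} and Theorem \ref{perfectSTABtheorem}. The result is essentially Yannakakis' trivial upper bound, simply read off from the standard linear description of $\stab(G)$ for perfect $G$, and sets the benchmark that the subsequent sections aim to improve by exploiting the decomposition theorem for perfect graphs.
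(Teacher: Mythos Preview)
Your proof is correct and follows exactly the same approach as the paper: both argue that $S_G$ has $|V|+|\MC|$ rows, apply Lemma~\ref{ranklemmaBlock}(i) to bound $\nrank(S_G)$ by this number, and invoke Theorem~\ref{FactorizationTheorem} to conclude. The paper states this even more tersely, but the content is identical.
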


As an example of application of Lemma \ref{CH_PG:SGlemma1}, $\xc(\stab(G))\le 2|V|$ when $G$ is a chordal graph (i.e., has no induced cycle of length at least 4, since then $G$ has at most $|V|$ maximal cliques). Moreover,  for the  complete graph $K_p$,  $\xc(\stab({K_p}))\le p+1$, since $K_p$ has a unique maximal clique. As  $\stab(K_p)$ has dimension $p$, the reverse inequality holds and thus $\xc(\stab(K_p))=p+1$. 
For the complement of  $K_p$,  $\xc(\stab(\overline{K_p}))\le 2p$, since $\overline{K_p}$ has $p$ maximal cliques.
In fact as $\stab(\overline{K_p})=[0,1]^p$ we have
$\xc(\stab(\overline{K_p}))=2p$   \cite{FKPT}.

 Furthermore, 
using   Lemma \ref{ranklemmaBlock}(ii), one can verify that when $G$ is perfect  the extension complexity of the  stable set polytope  of  $G$ and its complement $\oG$ are linearly related.

\begin{lemma}\emph{\cite{yannakakis1988expressing}} \label{lemGbar}
	Let $G= (V,E)$ be a perfect graph  and let $\oG$ be its complement. Then  \[\xc(\stab(\oG)) \leq \xc(\stab(G)) + |V|.\]
\end{lemma}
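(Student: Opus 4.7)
The plan is to split the slack matrix $S_{\oG}$ into two row blocks and bound the nonnegative rank of each. By Definition~\ref{slackG}, the rows of $S_{\oG}$ are indexed by $V\cup \MC_{\oG}$, and note that $\MC_{\oG}$ (the maximal cliques of $\oG$) coincides with the set of maximal stable sets of $G$, while its columns, indexed by $\MI_{\oG}$, coincide with the set of cliques of $G$. Let $A$ be the block of vertex rows and $B$ the block of maximal-stable-set rows. Since $A$ has at most $|V|$ rows, Lemma~\ref{ranklemmaBlock} gives $\nrank(A)\le |V|$ and $\nrank(S_{\oG})\le \nrank(A)+\nrank(B)\le |V|+\nrank(B)$. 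The task therefore reduces to showing $\nrank(B)\le \xc(\stab(G))=\nrank(S_G)$.

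To bound $\nrank(B)$, I would introduce the auxiliary matrix $M$ whose rows are indexed by \emph{all} cliques $C$ of $G$ (not only the maximal ones) and whose columns are indexed by all stable sets $I$ of $G$, with entries $M(C,I)=1-|I\cap C|$. The transpose $B^T$ is obtained from $M$ by keeping only the columns indexed by the maximal stable sets of $G$, so $B^T$ is a submatrix of $M$, giving $\nrank(B)=\nrank(B^T)\le \nrank(M)$. It therefore suffices to prove $\nrank(M)\le \nrank(S_G)$.

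The key step is an explicit expression of each row of $M$ as a nonnegative combination of rows of $S_G$. For any clique $C$ of $G$, pick a maximal clique $C'\supseteq C$; then for every stable set $I$,
\[ M(C,I)=1-|I\cap C|=\bigl(1-|I\cap C'|\bigr)+\sum_{v\in C'\setminus C}|\{v\}\cap I|=S_G(C',I)+\sum_{v\in C'\setminus C}S_G(v,I). \]
Hence $M=K\,S_G$ for some nonnegative matrix $K$, and any nonnegative factorization $S_G=TU$ yields $M=(KT)U$ with $KT\ge 0$, so $\nrank(M)\le \nrank(S_G)$. Chaining all inequalities gives $\xc(\stab(\oG))\le |V|+\xc(\stab(G))$.

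The main conceptual obstacle is choosing the right auxiliary matrix to mediate between $S_G$ and $S_{\oG}$: the clique-row block of $S_G$ and (the transpose of) $B$ both have entries $1-|I\cap C|$ but their rows and columns are restricted to different notions of maximality on opposite sides, so one cannot compare them directly. Passing through $M$, which is indexed by \emph{all} cliques and \emph{all} stable sets, resolves this asymmetry; after that, the little identity using a single clique row of $S_G$ plus a handful of vertex rows does the rest with no new ideas.
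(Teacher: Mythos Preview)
Your proof is correct and follows the approach the paper only sketches (``using Lemma~\ref{ranklemmaBlock}(ii), one can verify\ldots''): split $S_{\oG}$ into the $|V|$ vertex rows and the clique-row block $B$, and show $\nrank(B)\le\nrank(S_G)$. Your explicit row identity $M(C,\cdot)=S_G(C',\cdot)+\sum_{v\in C'\setminus C}S_G(v,\cdot)$ is one clean way to carry out that last step; an equivalent shortcut is to invoke Theorem~\ref{FactorizationTheorem} directly, noting that adding the (redundant) non-maximal clique inequalities to the linear description of $\stab(G)$ does not change the nonnegative rank, so $B^T$ is already a submatrix of a slack matrix of $\stab(G)$.
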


In what follows we first present a simple  bounding technique for the extension complexity which we then apply to double-split graphs.
After that we consider the extension complexity for the other four basic classes of perfect graphs.

\subsection{A simple bounding technique and double-split graphs}\label{secdoublesplit}

We begin with a simple bounding technique based on considering a partition $V=V_1\cup V_2$ of  the vertex set of $G=(V,E)$.

 Below and later in the paper we will use the following notation. 
For $k=1,2$ we let $G_k=G[V_k]$ denote the sugraph of $G$ induced by $V_k$,
$\MC_k$ denotes the set of maximal cliques of $G_k$ and 
$\MI_k$ denotes the set of independent sets of $G_k$. 
In addition $\MC_{12}$ (resp., $\MI_{12}$) denotes the set of `mixed' maximal cliques (resp.,   `mixed'  independent sets) of $G$, i.e., those that meet both $V_1$ and $V_2$.
Finally, we set $\MMR_k=V_k\cup \MC_k$, so that  the rows of the slack matrix $S_G$ of the stable polytope of $G$ are indexed by the set $\MMR_1\cup\MMR_2\cup \MC_{12}$, while  the columns of $S_G$ are indexed by the set $\MI_1\cup\MI_2\cup\MI_{12}$. With respect to these partitions of its row and column index sets, the matrix  $S_G$
has the following block form:


\begin{equation}\label{eqSG}
S_{G}  = 
\bordermatrix{ 		& \mathcal{I}_{1} & \mathcal{I}_{2} & \mathcal{I}_{12} \cr
\mathcal{R}_{1}	& S_{1,1} &	S_{1,2}	  &	S_{1,3}	\cr
\mathcal{R}_{2} & S_{2,1} & S_{2,2}   & S_{2,3} \cr
\mathcal{C}_{12}    & S_{3,1} &	S_{3,2}	  &	S_{3,3}	\cr  }
\end{equation}

\begin{lemma}\label{lemsplit}
Let $G=(V,E)$ be a perfect graph and let $V=V_1\cup V_2$ be a partition of its vertex set. Then we have
$$\xc(\stab(G))\le \xc(\stab(G[V_1]))+\xc(\stab(G[V_2])) +|\MC_{12}|,$$
where $\MC_{12}$ denotes the set of maximal cliques of $G$ that meet both $V_1$ and $V_2$.
\end{lemma}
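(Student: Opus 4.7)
The plan is to exploit the horizontal block decomposition of the slack matrix $S_G$ displayed in (\ref{eqSG}) by bounding the nonnegative rank of each of the three row blocks separately and then combining these bounds via Lemma \ref{ranklemmaBlock}. The key idea is that each of the first two row blocks can be read off from the slack matrix of $G_k=G[V_k]$ and is therefore controlled by $\xc(\stab(G_k))$, while the third block is small enough to be bounded trivially.

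The main observation for the $\MMR_1$-block is that, for any row $r\in\MMR_1=V_1\cup\MC_1$, the corresponding inequality of $\stab(G)$ involves only the variables $x_v$ with $v\in V_1$ (either $x_v\ge 0$ for $v\in V_1$, or a clique inequality $\sum_{v\in C}x_v\le 1$ with $C\subseteq V_1$). Hence, for any $I\in\MI_G$, the slack $S_G(r,I)$ depends only on $I\cap V_1$, and one checks directly that $S_G(r,I)=S_{G_1}(r,I\cap V_1)$. Since $I\cap V_1\in\MI_1$, every column of the $\MMR_1$-block of $S_G$ is actually a column of $S_{G_1}$; equivalently, this block can be written as $S_{G_1}\cdot M$ for a $0/1$ column-duplication matrix $M$. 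Starting from a nonnegative factorization $S_{G_1}=T_1U_1$ of intermediate dimension $\nrank(S_{G_1})$, the factorization $S_{G_1}M=T_1(U_1M)$ shows that the nonnegative rank of the $\MMR_1$-block is at most $\nrank(S_{G_1})=\xc(\stab(G_1))$, using Theorem \ref{FactorizationTheorem} together with the fact that $G_1$ is perfect as an induced subgraph of $G$. A symmetric argument gives the bound $\xc(\stab(G_2))$ for the $\MMR_2$-block, and the $\MC_{12}$-block has $|\MC_{12}|$ rows so its nonnegative rank is at most $|\MC_{12}|$ by Lemma \ref{ranklemmaBlock}(i).

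Combining the three bounds through the subadditivity of nonnegative rank under vertical concatenation (which follows from Lemma \ref{ranklemmaBlock}(i),(ii) by transposing) yields $\nrank(S_G)\le\xc(\stab(G_1))+\xc(\stab(G_2))+|\MC_{12}|$, and Theorem \ref{FactorizationTheorem} then gives the desired inequality since $G$ is perfect. The one subtle point I expect to be the main obstacle is checking that the matrix displayed in (\ref{eqSG}) is a legitimate slack matrix of $\stab(G)$: every maximal clique of $G$ lies in $\MC_1$, $\MC_2$, or $\MC_{12}$, so all the inequalities describing $\stab(G)$ via (\ref{perfectSTABtheorem_equation1})--(\ref{perfectSTABtheorem_equation2}) appear as rows; the possibly extra rows coming from cliques in $\MC_1\cup\MC_2$ that are maximal in $G_k$ but not maximal in $G$ correspond to valid (redundant) clique inequalities, so Theorem \ref{FactorizationTheorem} still applies and identifies $\nrank(S_G)$ with $\xc(\stab(G))$.
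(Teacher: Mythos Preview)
Your proof is correct and follows essentially the same approach as the paper: both decompose $S_G$ into its three row blocks, observe that the $\MMR_k$-block consists of duplicated columns of $S_{G_k}$ (hence has nonnegative rank at most $\nrank(S_{G_k})$), bound the $\MC_{12}$-block trivially by its number of rows, and combine via Lemma~\ref{ranklemmaBlock}. Your discussion of the ``subtle point'' that $\MC_k$ may contain cliques not maximal in $G$ (so that (\ref{eqSG}) has redundant rows but is still a valid slack matrix in the sense of Definition~\ref{DEF:Slackmatrix}) is a detail the paper leaves implicit, and it is good that you spelled it out.
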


\begin{proof}
We use the form of the slack matrix $S_G$ in (\ref{eqSG}). By construction, for $k=1,2$, 
we have $S_{k,k}=S_{G_k}$, 
 each column of $S_{k,3}$ is the copy of a column of $S_{k,k}$, and each column of $S_{1,2}$ (resp., $S_{2,1}$) coincides with the column of $S_{1,1}$ (resp., $S_{2,2}$) indexed by the empty set. 
Hence 
$ \nrank(S_{k,1}\ S_{k,2}\ S_{k,3}) = \nrank(S_{G_{k}})$ holds for $k=1,2$.
	Finally, we have
	$\nrank(
	S_{3,1} \ S_{3,2} \ S_{3,3})
	 \leq |\MC_{12}|$ since this matrix has  $|\MC_{12}|$ rows.
	Combining these and applying Lemma \ref{ranklemmaBlock} to $S_{G}$, we obtain the desired inequality.
\end{proof}

As an application, if $G$ is the disjoint union of two graphs $G_1$ and $G_2$ then we have the inequality:
$\xc(\stab(G))\le \xc(\stab(G_1))+\stab(G_2))$. As another application, we can upper bound the extension complexity for double-split graphs.

\begin{lemma}\label{lemdoublesplit}
	If $G=(V,E)$ is a double-split graph with parameters $(p,q,L_1,\cdots,L_p)$ then 
	\[\xc(\stab(G)) \leq  5p + 5q \le 5|V|/2\ \text{ and } \xc(\stab(\oG))\le 5p+5q\le  5|V|/2. \]
	\end{lemma}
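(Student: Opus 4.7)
The plan is to apply Lemma \ref{lemsplit} to the natural vertex partition $V = V_1 \cup V_2$ coming from the double-split structure, and bound each of the three resulting quantities. First, $G[V_1]$ is a perfect matching on $2p$ vertices whose $p$ maximal cliques are exactly its edges, so Lemma \ref{CH_PG:SGlemma1} yields $\xc(\stab(G[V_1])) \le 2p+p = 3p$. Next, $G[V_2]$ is the complement of a perfect matching on $2q$ vertices, so Lemma \ref{CH_PG:SGlemma1} applied to $\overline{G[V_2]}$ combined with Lemma \ref{lemGbar} gives $\xc(\stab(G[V_2])) \le (2q+q) + 2q = 5q$.

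The main step is to count the set $\MC_{12}$ of mixed maximal cliques of $G$. For each $i \in [p]$ set $T_{a_i} := N_G(a_i) \cap V_2$ and $T_{b_i} := N_G(b_i) \cap V_2$; by Definition \ref{defsplit} each is a transversal of the pairs $\{x_j,y_j\}$ in $V_2$, hence a maximal clique of $G[V_2]$, and moreover $T_{a_i} \cap T_{b_i} = \emptyset$. I will show that the mixed maximal cliques of $G$ are precisely the $2p$ sets $\{a_i\} \cup T_{a_i}$ and $\{b_i\} \cup T_{b_i}$ for $i \in [p]$. Indeed, any clique $C$ meeting $V_1$ in two or more vertices must consist of a single matching edge $\{a_i,b_i\}$ (since there are no edges between distinct matching pairs inside $V_1$), and then the disjointness $T_{a_i} \cap T_{b_i} = \emptyset$ forces $C \cap V_2 = \emptyset$, so $C$ is not mixed; hence a mixed maximal clique meets $V_1$ in exactly one vertex, say $a_i$, and then $C\cap V_2 \subseteq T_{a_i}$, with maximality inside $G[V_2]$ forcing $C \cap V_2 = T_{a_i}$ (and one checks no vertex of $V_1\setminus\{a_i\}$ is adjacent to all of $T_{a_i}$ while non-adjacent to $a_i$ being ruled out). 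Substituting $|\MC_{12}| = 2p$ into Lemma \ref{lemsplit} yields $\xc(\stab(G)) \le 3p + 5q + 2p = 5p+5q$, and since $|V| = 2p+2q$ this equals $5|V|/2$.

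For the bound on $\xc(\stab(\oG))$, the plan is to observe that $\oG$ is itself a double-split graph, with parameters $(q, p, L'_1, \ldots, L'_q)$ for suitable sets $L'_j \subseteq [p]$: the edges $\{x_j, y_j\}$ of $\oG[V_2]$ play the role of the matching pairs, while the non-edges $\{a_i,b_i\}$ of $\oG[V_1]$ play the role of the complementary pairs. Applying the same three-step argument to $\oG$ with the roles of $p$ and $q$ swapped then produces $\xc(\stab(\oG)) \le 5q + 5p = 5|V|/2$. The only non-routine ingredient in the whole proof is the enumeration of $\MC_{12}$; the remaining estimates are mechanical applications of Lemmas \ref{CH_PG:SGlemma1}, \ref{lemGbar} and \ref{lemsplit} already established in this section.
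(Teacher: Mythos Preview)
Your proof is correct and follows essentially the same approach as the paper: apply Lemma~\ref{lemsplit} to the partition $V=V_1\cup V_2$, bound $\xc(\stab(G[V_1]))\le 3p$ via Lemma~\ref{CH_PG:SGlemma1}, bound $\xc(\stab(G[V_2]))\le 5q$ via Lemmas~\ref{CH_PG:SGlemma1} and~\ref{lemGbar}, identify the $2p$ mixed maximal cliques, and handle $\oG$ by noting it is again double-split with $p$ and $q$ swapped. Your enumeration of $\MC_{12}$ is in fact more detailed than the paper's, which simply lists the cliques without justification.
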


\begin{proof}
The inequality $5p+5q\le 5|V|/2$ is clear since $|V|=2p+2q$. 
As $G$ is perfect and its complement is again a double-split graph (exchanging $p$ and $q$), it  suffices to show the inequality $\nrank(S_G) \leq  5p + 5q$. 
For this we use Lemma~\ref{lemsplit}, with the partition $V=V_1\cup V_2$ in the definition of a double-split graph from Definition~\ref{defsplit}. As $G_1=G[V_1]$ is a disjoint union of $p$ edges,  $G_1$ has  $p$ maximal cliques and $2p$ vertices, which  implies  $\nrank (S_{G_1})\le 3p$ (by Lemma \ref{CH_PG:SGlemma1}). As  $G_2=G[V_2]$ is the complement of the disjoint union of $q$ edges, we obtain  $\nrank(S_{G_2}) \le 3q+2q=5q$ (using Lemma \ref{lemGbar}). 
Finally there are $2p$  maximal cliques in $\MC_{12}$, given by the sets 
$\{a_{i}\}  \cup x_{L_{i}} \cup  y_{\overline{L_{i}}}, \text{ and }  \{b_{i}\}  \cup x_{\overline{L_{i}}} \cup  y_{L_{i}} \text{ for }   i \in [p].$
Hence, applying Lemma \ref{lemsplit} we obtain that $\xc(\stab(G))\le \xc(\stab(G_1))+\xc(\stab(G_2)) +|\MC_{12}|$ is upper bounded by  $\nrank(S_{G_1})+\nrank(S_{G_2})+|\MC_{12}|\le 3p+5q+2p=5p+5q.$
\end{proof}

\subsection{Bipartite graphs and  their line graphs and complements }

We just saw in Lemma \ref{lemdoublesplit} that the extension complexity of the stable set polytope of double-split graphs is linear in $|V|$. We now consider the other classes of basic perfect graphs.

The next bound for  bipartite graphs and their complements is well known and follows directly from Lemma \ref{CH_PG:SGlemma1} combined with Lemma \ref{lemGbar}. 

\begin{lemma}\label{SGlemma3} 
Let  $G= (V,E)$ be  a bipartite graph. Then 
	\[\xc(\stab(G)) \leq |V| + |E| \text{ and } \xc(\stab(\oG)) \leq 2 |V| + |E|.\]
\end{lemma}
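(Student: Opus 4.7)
The plan is a direct application of Lemma \ref{CH_PG:SGlemma1} together with Lemma \ref{lemGbar}; the only real work is a short combinatorial observation about cliques in bipartite graphs. First I would note that since $G$ is bipartite it is triangle-free, and therefore every clique of $G$ has size at most $2$. Consequently every maximal clique of $G$ of size at least two is an edge of $G$, so $|\MC_G| \le |E|$. (If $G$ has an isolated vertex $v$, then the singleton $\{v\}$ is also a maximal clique; this is a harmless technicality that I would handle either by assuming without loss of generality that $G$ has no isolated vertex, or by factoring isolated vertices out via the product decomposition $\stab(G) = \stab(G - I) \times [0,1]^{|I|}$ and using subadditivity of extension complexity under Cartesian products.)

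With this count in hand, the first inequality follows immediately from Lemma \ref{CH_PG:SGlemma1}, which states $\xc(\stab(G)) \le |V| + |\MC_G|$ for any perfect graph $G$, recalling that bipartite graphs are perfect. Combined with the bound $|\MC_G| \le |E|$ just established, we obtain $\xc(\stab(G)) \le |V| + |E|$, which is the first claim.

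For the second inequality I would then invoke Lemma \ref{lemGbar}, which asserts that $\xc(\stab(\oG)) \le \xc(\stab(G)) + |V|$ whenever $G$ is perfect. Plugging in the first bound immediately yields $\xc(\stab(\oG)) \le 2|V| + |E|$, as claimed.

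Since both bounds are obtained by one-line applications of previously established lemmas, there is no substantive obstacle in the proof: the hardest step is the elementary observation that a triangle-free graph has only edges as maximal cliques of size $\ge 2$. Everything else is bookkeeping, with the only minor care being the treatment of isolated vertices described above.
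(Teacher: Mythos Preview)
Your proposal is correct and follows exactly the route the paper indicates: the paper simply remarks that the bound ``follows directly from Lemma~\ref{CH_PG:SGlemma1} combined with Lemma~\ref{lemGbar},'' and you spell out precisely this, together with the observation that maximal cliques in a bipartite graph are edges. Your explicit mention of the isolated-vertex caveat is in fact more careful than the paper, which silently ignores it.
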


Recently Aprile et al.  \cite{AFFHM} showed the following alternative upper bound for bipartite graphs:
$\xc(\stab(G))=O(|V|^2/\log |V|)$, which is thus sharper than the bound $|V|+|E|$ when the number of edges is quadratic in $|V|$. 
Moreover   a class of bipartite graphs $G$ is constructed in \cite{AFFHM} for which $\xc(\stab(G))=\Omega(|V|\log |V|)$.
Finding the exact regime of the extension complexity for bipartite graphs is still open.

Next we see that for  line graphs of bipartite graphs and their complements, the extension complexity is linear in $|V|$.

\begin{lemma}\label{SGlemma5}
Let  $G= (V,E)$ be  the line graph of a bipartite graph. Then 
	\[\xc(\stab(G)) \leq 2 |V| \text{ and } \xc(\stab(\oG)) \leq 3  |V|.\]
\end{lemma}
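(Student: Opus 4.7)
The plan is to combine Lemma~\ref{CH_PG:SGlemma1}, which gives $\xc(\stab(G)) \le |V(G)| + |\MC_G|$ for any perfect graph, with a direct count of the maximal cliques of a line graph of a bipartite graph. Write $G = L(H)$ for some bipartite graph $H$, so that $|V(G)| = |E(H)|$; we may assume $H$ has no isolated vertices, since they do not affect $L(H)$. The key step is the combinatorial bound $|\MC_G| \le |V(G)|$; once this is established, Lemma~\ref{CH_PG:SGlemma1} yields $\xc(\stab(G)) \le 2|V(G)|$ directly, and Lemma~\ref{lemGbar} then gives $\xc(\stab(\oG)) \le \xc(\stab(G)) + |V(G)| \le 3|V(G)|$.

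Since $H$ is triangle-free, every clique of $L(H)$ consists of pairwise incident edges of $H$ sharing a common endpoint, i.e., corresponds to a star of $H$. A straightforward case analysis then shows that the maximal cliques of $L(H)$ are exactly (a) the full stars $E_H(v) := \{e \in E(H) : v \in e\}$ at vertices $v \in V(H)$ with $\deg_H(v) \ge 2$, and (b) the singletons $\{e\}$ for each edge $e$ whose connected component in $H$ is a $K_2$ (so that neither endpoint has any other incident edge that could enlarge the clique).

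Let $c$ denote the number of $K_2$-components of $H$ and $n_1$ the number of degree-one vertices of $H$. The count then reads
\begin{align*}
|\MC_G| &= |\{v \in V(H) : \deg_H(v) \ge 2\}| + c \\
&\le \tfrac{1}{2}\sum_{v:\, \deg_H(v) \ge 2}\!\deg_H(v) + c \\
&= \tfrac{1}{2}\bigl(2|E(H)| - n_1\bigr) + c \;\le\; |E(H)| = |V(G)|,
\end{align*}
where the last inequality uses $n_1 \ge 2c$, since each $K_2$-component of $H$ contributes two degree-one vertices. Feeding this into Lemma~\ref{CH_PG:SGlemma1} and then Lemma~\ref{lemGbar} yields both inequalities stated in the lemma.

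The only non-routine step is the identification of the maximal cliques of $L(H)$ in the degenerate case (b); I expect this to be the main pitfall, since it is easy to overlook the contribution of $K_2$-components, and the inequality $n_1 \ge 2c$ is exactly what is needed to absorb the extra $+c$ term in the counting estimate.
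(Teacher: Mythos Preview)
Your proof is correct and follows essentially the same route as the paper: both arguments identify the maximal cliques of $L(H)$ with the maximal stars of $H$ and bound their number by $|E(H)|$ via a degree-counting inequality, then invoke Lemma~\ref{lemGbar} for the complement. The only cosmetic difference is that the paper phrases things in terms of the matching polytope of $H$ and argues component by component, whereas you apply Lemma~\ref{CH_PG:SGlemma1} directly and do the count globally; neither gains anything over the other.
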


\begin{proof}
Assume  $G$ is the line graph of a bipartite graph $G'=(V',E')$. Then  $V(G)=E'$ and $\stab(G)$ is the matching polytope $M(G')$ of $G'$.
For $v\in V'$, let $\delta(v)$ denote the set of edges in $G'$ incident to $v$, called the star of $v$, and let $W$ be the set of vertices $v\in V'$ for which $\delta(v)$ is maximal (i.e.,  not strictly contained in the star of another vertex of $G'$). Then $\stab(G)=M(G')$ is defined by the nonnegativity constraints $x_e\ge 0$ ($e\in E'$) and the star constraints $\sum_{e\in \delta(v)} x_e\le 1$ for $v\in W$. We  show that in the description of $M(G')$ we need to consider at most $|E'|$ star constraints.  Clearly we may assume that $G'$ is connected (else consider each connected component).
If some node $v\in W$  is adjacent to a unique other node $u\in V'$ then $G'$ consists only of  the edge $\{u,v\}$ and it is clear that one star constraint  suffices.
Otherwise we may assume that each node $v\in W$ has degree at least 2, which implies $|E'|\ge |W|$ and thus the number of star constraints is at most $|E'|$.
Summarizing, the matching polytope of $G'$ is defined by at most $2|E'|$ linear constraints, which shows that $\stab(G)$ is defined by at most $2|E'|=2|V|$ linear constraints.
\end{proof}

Finally we show an upper bound which is uniform for all basic perfect graphs, which we will use in Section \ref{secappli}  to deal with general perfect graphs.
\begin{corollary}\label{corbasic}
For every basic perfect graph $G=(V,E)$, 
$\xc(\stab(G))\le 2(|V|+|E|)$ holds.
\end{corollary}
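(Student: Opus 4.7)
The plan is a case analysis over the five basic perfect graph classes, invoking the tailored bound from the preceding lemmas in each case and verifying it is at most $2(|V|+|E|)$. The straightforward cases cover bipartite graphs (Lemma \ref{SGlemma3} gives $|V|+|E|$), their complements (same lemma gives $2|V|+|E|$), and line graphs of bipartite graphs (Lemma \ref{SGlemma5} gives $2|V|$); all three bounds are visibly at most $2(|V|+|E|)$. For a double-split graph with parameters $(p,q,L_1,\ldots,L_p)$, Lemma \ref{lemdoublesplit} gives $5|V|/2$, and I would substitute $|V|=2(p+q)$ together with the cross-edge count $|E|\ge 2pq$ (from the definition) to reduce the required inequality $5|V|/2 \leq 2(|V|+|E|)$ to $p+q \leq 4pq$, which holds trivially for $p,q\ge 2$.

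The main obstacle is the class of complements of line graphs of bipartite graphs. For $G = \overline{L(H)}$ with $H$ bipartite, Lemma \ref{SGlemma5} only yields $\xc(\stab(G)) \leq 3|V|$, and this is at most $2(|V|+|E|)$ precisely when $|V| \leq 2|E|$. Using $|V(G)| = |E(H)| = m$ and $|E(G)| = \binom{m}{2} - \sum_{v \in V(H)} \binom{d_H(v)}{2}$, a short algebraic manipulation (expanding and using the handshake identity $\sum_v d_H(v) = 2m$) shows that the negation $|V| > 2|E|$ is equivalent to $\sum_v d_H(v)^2 > m^2$. I would resolve this case by showing that this inequality forces $G$ itself to be bipartite, so that Lemma \ref{SGlemma3} applies and yields $\xc(\stab(G)) \leq |V|+|E| \leq 2(|V|+|E|)$.

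To establish the bipartiteness claim, I write $H=(A\cup B, E(H))$ and use $\sum_{v\in A}d_H(v)^2 \leq (\max_A d_H) \cdot m$ together with the analogous bound over $B$ (since $\sum_A d_H = \sum_B d_H = m$). The hypothesis then forces $\max_A d_H + \max_B d_H > m$, so choosing $u^*\in A$ and $v^*\in B$ attaining these maxima gives $d_H(u^*)+d_H(v^*) > m$. Since the number of edges incident to $u^*$ or $v^*$ equals $d_H(u^*)+d_H(v^*) - [u^*v^*\in E(H)]$ and is at most $m$, we must have $u^*v^* \in E(H)$ and that $\{u^*, v^*\}$ is a vertex cover of $H$. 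Consequently $V(L(H))$ partitions into the two cliques of $L(H)$ formed by the edges at $u^*$ and by the remaining edges at $v^*$, and these are two independent sets of $G=\overline{L(H)}$, so $G$ is bipartite. The delicate point—and the technical heart of the argument—is this extraction of a size-2 vertex cover from a second-moment inequality on the degree sequence.
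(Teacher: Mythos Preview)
There is a genuine gap in your treatment of complements of bipartite graphs. When the basic perfect graph under consideration is $H=\overline{G}$ with $G=(V,E)$ bipartite, Lemma~\ref{SGlemma3} gives $\xc(\stab(H))\le 2|V|+|E|$, but here $|E|$ is the edge count of the \emph{bipartite} graph $G$, not of $H$. The corollary asks for the bound $2(|V(H)|+|E(H)|)=2|V|+2|E(\overline G)|$, so what you actually need is $|E(G)|\le 2|E(\overline G)|$, and this is not ``visible''. The paper closes this gap by noting that if $H=\overline G$ is itself bipartite then the first case already applies; otherwise $|V|\ge 3$, and combining $|E(G)|\le |V|^2/4$ with $|E(\overline G)|=\binom{|V|}{2}-|E(G)|$ yields $|E(G)|\le |V|(|V|-1)/3\le 2|E(\overline G)|$.

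Your argument for complements of line graphs of bipartite graphs is correct but considerably more elaborate than necessary. The paper simply observes that $|V(\overline G)|>2|E(\overline G)|$ forces $\overline G$ to have an isolated vertex; equivalently $G=L(G')$ has a vertex adjacent to all others, i.e.\ $G'$ has an edge $e$ meeting every other edge, so the two endpoints of $e$ already form a vertex cover of $G'$ and $\overline G$ is bipartite. Your second-moment inequality reaches the same vertex-cover conclusion and is a valid alternative, but the isolated-vertex observation gets there in one line.
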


\begin{proof}
The claim is obvious if $G$ is bipartite or the line graph of a bipartite graph.
Assume now  $G=(V,E)$ is bipartite and $\overline G=(V,\overline E)$ is not bipartite (thus $n\ge 3$). It suffices to show 
$|E| \le 2|\overline E|$, which then implies $\xc(\overline G)\le 2|V|+|E|\le 2(|V|+|\overline E|)$.
As $|\overline E|={|V|\choose 2}-|E|$,  $|E| \le 2|\overline E|$ is equivalent to $|E|\le {|V|(|V|-1)/3}$, which follows from $|E|\le |V|^2/4$.

Consider now the case when $G$ is the line graph of a bipartite graph $G'$. By Lemma \ref{SGlemma5},
$\xc(\stab(\overline G))\le 3|V|$. We show that $\xc(\stab(\overline G))\le 2(|V|+|\overline E|)$, 
which follows if we can show   $|V|\le 2|\overline E|$.
If $\overline G$ has no isolated vertex then $|V|\le 2|\overline E|$ indeed holds. Assume now  $\overline G$ has an isolated vertex. Then $G$ has a vertex adjacent to all other vertices, which means $G'$ has an edge incident to all other edges of $G'$.
This implies that $G$ is the union of two cliques intersecting at a single vertex and thus $\overline G$ is a bipartite graph, so we are done as this case was treated above.

Finally if $G$ is a double-split graph, then it has no isolated vertex and thus, by Lemma~\ref{lemdoublesplit},
$\xc(\stab(G))\le 5|V|/2\le 2(|V|+|E|)$.
\end{proof}

\section{Graph operations}\label{secoperations}

We now consider some graph operations that play an important role when dealing with perfect graphs.
The operation of ``graph substitution" was first considered by Lov\'asz \cite{Lo72a} as crucial tool for his  {\em perfect graph theorem}, stating that the class of perfect graphs is closed under taking graph complements. 
After that we consider the two graph operations: 2-joins and skew partitions, that are used in the structural characterization of perfect graphs by \citet{chudnovsky2006strong}.

\subsection{Graph substitution}\label{graphsubstitution}
In this section, we consider the behaviour of the extension complexity of $\stab(G)$ when $G$ is obtained  from  two other graphs $G_1$ and $G_2$ via the  ``graph substitution" operation. This operation preserves perfect graphs: if $G_1$ and $G_2$ are perfect, then $G$ is perfect  \cite{chvatal1975certain}.

\begin{definition}
	Let $G_{1}=(V_{1},E_{1})$ and $G_{2}=(V_{2},E_{2})$ be two vertex-disjoint graphs and let $u$ be a vertex of $G_{1}$. {\em Substituting $G_2$ in $G_1$ at $u$} 
	produces the graph $G = \mathcal{S}(G_{1},u,G_{2})$, where  $G = (V,E)$  with
	$$ V = (V_{1} \backslash \{u\}) \cup V_{2},$$ 
	$$ E =   E(G_{1}[V_{1}\backslash \{u\}]) \cup E_{2} \cup \bigcup_{v\in V_2}\big\{\{v,w\}: \{u,w\}\in E_1\big\}.$$
\end{definition}

We  show that the extension complexity of $\stab(G)$ is bounded by the sum of the extension complexities of $\stab(G_{1})$ and $\stab(G_{2})$. We will use the following lemma.

\begin{lemma}  \label{sublemma1}
	Let $P$ be a nonempty polytope. Consider an extended formulation of $P$:
	\begin{equation}
	Ex+Fs=g, \; s \geq 0. \label{sublemma1_eq}
	\end{equation}
	 If the pair $(x_{0},s_{0})$ satisfies $Ex_{0}+Fs_{0}=0$ and $s_{0} \geq 0$,  then $x_{0} = 0$.
\end{lemma}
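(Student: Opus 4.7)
The plan is to use the fact that $P$ is bounded (as a polytope) together with a standard recession-cone argument applied to the slack extended formulation.

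First I would pick an arbitrary point $x^{*}\in P$, which exists since $P$ is nonempty. Because $(\ref{sublemma1_eq})$ is an extended formulation of $P$, there exists a lifting $s^{*}\ge 0$ such that $Ex^{*}+Fs^{*}=g$. Now, for any scalar $\lambda\ge 0$, consider the pair $(x^{*}+\lambda x_{0},\,s^{*}+\lambda s_{0})$. Using the hypothesis $Ex_{0}+Fs_{0}=0$ together with $Ex^{*}+Fs^{*}=g$, we get
\[
E(x^{*}+\lambda x_{0})+F(s^{*}+\lambda s_{0})=g,
\]
and since $s^{*}\ge 0$, $s_{0}\ge 0$, and $\lambda\ge 0$, we also have $s^{*}+\lambda s_{0}\ge 0$. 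Hence $(x^{*}+\lambda x_{0},s^{*}+\lambda s_{0})$ satisfies $(\ref{sublemma1_eq})$, which by the definition of extended formulation means $x^{*}+\lambda x_{0}\in P$ for all $\lambda\ge 0$.

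Finally I would invoke the boundedness of the polytope $P$: if the entire ray $\{x^{*}+\lambda x_{0}:\lambda\ge 0\}$ lies in $P$, then $x_{0}$ must be the zero vector, for otherwise $\|x^{*}+\lambda x_{0}\|\to\infty$ as $\lambda\to\infty$, contradicting that $P$ is a bounded set. This forces $x_{0}=0$, as required.

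No step here is genuinely hard; the only subtlety is remembering that we are allowed to take $\lambda\ge 0$ rather than symmetric $\lambda$, which is exactly why the nonnegativity of both $s^{*}$ and $s_{0}$ is what makes the argument go through without any sign hypothesis on $x_{0}$. The key conceptual point (should it be worth highlighting in the actual write-up) is that the recession cone of the polyhedron described by $(\ref{sublemma1_eq})$ projects, under $(x,s)\mapsto x$, into the recession cone of $P$, which is $\{0\}$ by boundedness.
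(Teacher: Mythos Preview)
Your proof is correct and follows essentially the same argument as the paper: pick a feasible pair $(x,s)$, observe that $(x,s)+\lambda(x_0,s_0)$ remains feasible for all $\lambda\ge 0$, and conclude $x_0=0$ from the boundedness of $P$. The paper's write-up is terser but the idea is identical.
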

\begin{proof}
As $P\neq \emptyset$ there exists a feasible solution 	$(x,s)$  of  $(\ref{sublemma1_eq})$. For any $\lambda \ge 0$,  $(x,s)+ \lambda  (x_{0},s_{0})$ also satisfies  $(\ref{sublemma1_eq})$, which implies $x + \lambda  x_{0} \in P$ and thus $x_0=0$ since  $P$ is  bounded.\end{proof}


\newcommand{\ou}{\bar{u}}

\begin{theorem} \label{subtheorem}
	Let $G_{1}=(V_{1},E_{1})$ and  $G_{2}=(V_{2},E_{2})$ be  vertex-disjoint graphs and $u\in V_1$.
	 If $G=\mathcal{S}(G_{1},u,G_{2})$ is the graph obtained by substituting $G_{2}$ in $G_{1}$ at $u$, then we have 
	$$\xc(\stab(G)) \leq \xc(\stab(G_{1})) + \xc(\stab(G_{2})).$$
\end{theorem}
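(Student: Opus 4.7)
The plan is to build an extended formulation of $\stab(G)$ of size $\xc(\stab(G_1)) + \xc(\stab(G_2))$ by gluing slack form extended formulations of $\stab(G_1)$ and $\stab(G_2)$, using a single lifting variable $y_u \in \R$ to play the role of the removed vertex $u$. The structural fact that enables this is that every vertex in $V_2$ inherits the $V_1\setminus\{u\}$-neighborhood of $u$ in $G_1$; consequently, writing $I_1' = I \cap (V_1\setminus\{u\})$ and $I_2 = I \cap V_2$, a subset $I\subseteq V$ is stable in $G$ if and only if $I_2$ is stable in $G_2$, and either $I_2 = \emptyset$ with $I_1'$ stable in $G_1$, or $I_2 \neq \emptyset$ with $I_1'\cup\{u\}$ stable in $G_1$.

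Invoking Theorem \ref{FactorizationTheorem}, I pick slack form extended formulations $\stab(G_1) = \{y \in \R^{V_1} : \exists\, s \geq 0,\ E_1 y + F_1 s = g_1\}$ of size $r_1 = \xc(\stab(G_1))$ and $\stab(G_2) = \{z \in \R^{V_2} : \exists\, t \geq 0,\ E_2 z + F_2 t = g_2\}$ of size $r_2 = \xc(\stab(G_2))$. In the variables $x \in \R^V$ together with lifting variables $y_u \in \R$, $s \in \R^{r_1}$, $t \in \R^{r_2}$, I then consider the system
\begin{equation*}
E_1 \tilde x + F_1 s = g_1, \quad E_2\, x_{V_2} + F_2 t = y_u\, g_2, \quad s \geq 0,\ t \geq 0,
\end{equation*}
where $\tilde x \in \R^{V_1}$ is defined by $\tilde x_v = x_v$ for $v \in V_1\setminus\{u\}$ and $\tilde x_u = y_u$, and $x_{V_2}$ denotes the restriction of $x$ to $V_2$. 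The system has exactly $r_1 + r_2$ inequalities, so by Theorem \ref{FactorizationTheorem} it will suffice to show that its projection onto $x$ is $\stab(G)$.

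For the inclusion of $\stab(G)$ into the projection, each $\chi^I$ with $I$ stable in $G$ lifts feasibly: if $I_2 \neq \emptyset$ take $y_u = 1$, so $\tilde x = \chi^{I_1'\cup\{u\}} \in \stab(G_1)$ and $x_{V_2} = \chi^{I_2} \in \stab(G_2)$; if $I_2 = \emptyset$ take $y_u = 0$ and $t = 0$, so the second equation reduces to $0 = 0$. For the reverse inclusion, any feasible tuple $(x, y_u, s, t)$ satisfies $\tilde x \in \stab(G_1)$, so I write $\tilde x = \sum_i \mu_i \chi^{J_i}$ with $J_i$ stable in $G_1$ and $\sum_i \mu_i = 1$. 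Letting $U = y_u = \sum_{u \in J_i} \mu_i$, when $U > 0$ the second equation gives $x_{V_2} = U z$ with $z = \sum_j \nu_j \chi^{K_j} \in \stab(G_2)$, and the identity
\begin{equation*}
(x_1', x_{V_2}) \;=\; \sum_{u \notin J_i} \mu_i\,(\chi^{J_i}, 0) \;+\; \sum_{u \in J_i,\ j} \mu_i \nu_j\,(\chi^{J_i\setminus\{u\}}, \chi^{K_j})
\end{equation*}
exhibits $x$ as a convex combination of characteristic vectors of stable sets of $G$, by the structural description of the first paragraph, hence $x \in \stab(G)$.

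The main obstacle is the degenerate case $U = 0$: the second block of the system then reduces to $E_2\, x_{V_2} + F_2 t = 0$ with $t \geq 0$, and I must deduce $x_{V_2} = 0$ so that the entire lift sits over a type-$(a)$ stable set. This is exactly what Lemma \ref{sublemma1} provides, applied to the nonempty polytope $\stab(G_2)$; once $x_{V_2} = 0$ is secured, $x = (x_1', 0)$ is a convex combination of vectors $(\chi^{J_i}, 0)$ where $J_i$ is stable in $G_1$ and does not contain $u$ (since $U = 0$), each of which corresponds to a stable set of $G$. This is precisely why Lemma \ref{sublemma1} was isolated as a preliminary ingredient.
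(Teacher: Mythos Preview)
Your proof is correct and follows essentially the same approach as the paper: you construct the identical extended formulation by scaling the $\stab(G_2)$ system by the auxiliary variable $y_u$, and you invoke Lemma~\ref{sublemma1} at exactly the same place (the degenerate case $y_u=0$) to force $x_{V_2}=0$. The only cosmetic difference is your case split in the forward direction (you branch on whether $I_2=\emptyset$, whereas the paper branches on whether $I_1'\cup\{u\}$ is stable in $G_1$); your structural characterization in the first paragraph makes your split equally valid, and the resulting lifts coincide with the paper's.
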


\begin{proof}
We use the following notation: for $x\in \R^{|V|}$ and $S\subseteq V$, $x(S)=(x_v)_{v\in S}$ denotes the restriction of $x$ to its entries indexed by $S$. 
For $i = 1,2$, 
set $r_i=\xc(\stab(G_i))$ and assume the linear system
	\begin{equation}
	E_{i}x_{i}+F_{i}s_{i}=g_{i}, \; s_{i} \geq 0 \label{sublemma2_sys1}
	\end{equation}
	is an extended formulation in slack form of $\stab(G_{i})$ of size $r_{i}$ (whose existence  follows from  Theorem \ref{FactorizationTheorem}), with  variables $x_i\in \R^{|V_i|}$ and lifting variables $s_{i} \in \R^{r_{i}}$.
	
Consider now variables 	$y_i\in \R^{|V_i|}$ and $t_i\in \R^{r_i}$ for $i=1,2$.
For convenience set $y_1(\ou)=y_1(V_1\setminus\{u\})$, so that  $y_1=(y_1(\ou),y_1(u))$ and the vector
$(y_1(\ou),y_2)$ is indexed by the vertex set $V$ of $G$.
 We claim that the  linear system 
	\begin{equation}
		\begin{cases}
	E_{1}y_{1}+F_{1}t_{1} = g_{1}, \; & t_{1}\geq 0, \\ 
	E_{2}y_{2}+F_{2}t_{2} - g_{2} \cdot y_{1}(u) = 0, \; & t_{2}\geq 0
	\end{cases} \label{sublemma2sys3}
	\end{equation}
provides an extended formulation of $\stab(G)$, with lifting variables $(t_1,t_2,y_1(u))$. As its size is equal to $r_1+r_2$ this implies the desired inequality $\xc(\stab(G))\le r_1+r_2$.
	
To prove that (\ref{sublemma2sys3})  is an extended formulation of $\stab(G)$,  we have to show that a vector $(y_{1}(\bar{u}), y_{2})$ belongs to $\stab(G) $ if and only if there exists a vector $(t_{1},t_{2},y_{1}(u))$ in $\R^{r_{1}+r_{2}+1}$ for which the vector $(y_{1},y_{2},t_{1},t_{2})$ 
satisfies the linear system  (\ref{sublemma2sys3}), where we set $y_1=(y_1(\ou),y_1(u))$.
	
We first show the 	``only if'' part.
 In view of Remark \ref{EFlemma} we may assume that  $(y_{1}(\bar{u}), y_{2})$ is a vertex of $\stab(G)$. Then $(y_{1}(\bar{u}), y_{2})$ is the characteristic vector $\chi^I$ of a stable set $I$ in $G$. Then the set $I_1=I\cap V_1$ is a stable set in $G_1$, contained in $V_1\setminus\{u\}$, and the set $I_2=I\cap V_2$ is stable in $G_2$. We consider the following two cases depending on whether the set $I_{1} \cup \{u\}$ is stable in $G_{1}$.

	\begin{enumerate}[label=(\roman*)]
		\item If ${I_{1} \cup \{u\}}$ is stable in $G_{1}$, then there exists a  nonnegative vector $t_{1} \in \R^{r_{1}}$ for which the vector $(y_{1},t_{1}) = (\chi^{I_{1} \cup \{u\}}, t_{1})$ satisfies the system $E_1y_1+F_1t_1=g_1$. Similarly, since $I_{2}$ is stable in $G_{2}$, there exists a nonnegative vector $t_{2}\in \R^{r_{2}}$ for which the vector $(y_{2},t_{2}) = (\chi^{I_{2}}, t_{2})$ satisfies the linear system $E_2y_2+F_2t_2=g_2$. As $y_{1}(u) = 1$, the vector $(y_{1}, y_{2}, t_{1}, t_{2})$ satisfies the linear system $(\ref{sublemma2sys3})$.

		\item If ${I_{1} \cup \{u\}}$ is not stable in $G_{1}$, then $u$ is adjacent to one vertex in $I_1$ and thus 
		$I_{2} = \emptyset$. 
		As $I_1$ is stable in $G_1$, there exists a nonnegative vector $t_{1}\in \R^{r_{1}}$ such that $(y_{1},t_{1}) = (\chi^{I_{1}}, t_{1})$ satisfies the system $E_1y_1+F_1t_1=g_1$. Note that $y_1(u)=0$ as $u\not\in I_1$. Taking $y_{2} =0$ and $t_{2} = 0$, we have that $(y_{2},t_{2})$ satisfies  $E_{2}y_{2}+F_{2}t_{2} - g_{2} \cdot y_{1}(u) = 0$.
		Thus  $(y_{1},y_{2},t_{1},t_{2})$ satisfies the linear system  $(\ref{sublemma2sys3})$. 
	\end{enumerate}
	In both cases,  we have constructed lifting variables $(t_{1},t_{2},y_{1}(u)) \in \R^{r_{1}+r_{2}+1}$ such that the vector $(y_{1},y_{2},t_{1},t_{2}) \in \R^{|V_{1}|+|V_{2}|+r_{1}+r_{2}}$ satisfies the linear system  $(\ref{sublemma2sys3})$. \\

	We now show the  ``if' part".  Assume  $(y_{1},y_{2},t_{1},t_{2}) \in \R^{|V_{1}|+|V_{2}|+r_{1}+r_{2}}$ satisfies   the linear system (\ref{sublemma2sys3}). 
Assume first $y_1(u)=0$. Then the conditions  $E_2y_2+F_2t_2=0,$ $t_2\ge 0$ imply $y_2=0$ (by Lemma \ref{sublemma1} applied to $\stab(G_2)$). Moreover the conditions $E_1y_1+F_1t_1=g_1,$ $t_1\ge 0$ imply that $y_1\in \stab(G_1)$. Hence $y_1$ is a convex combination of characteristic vectors of stable sets $I_1\subseteq V_1\setminus \{u\}$, which also gives a decomposition of the vector $(y_1(\ou),y_2)$ as a convex combination of characteristic vectors of stable sets in $G$.

	We may now assume $y_1(u)\ne 0$. As $(y_{1},y_{2},t_{1},t_{2})$ satisfies  the system (\ref{sublemma2sys3}) we deduce that $y_1\in \stab(G_1)$ and ${1\over y_1(u)}y_2\in \stab(G_2)$.
Say $$y_1=\sum_{I\in \MI_1}\lambda_{I}	\chi^I,\ \ y_2/y_1(u)=\sum_{J\in \MI_2} \mu_J\chi^J,$$
where all sets in $\MI_1$ (resp., $\MI_2$) are stable sets in $G_1$ (resp., $G_2$),  $\sum_I\lambda_I=\sum_J\mu_J=1$ and $\lambda_I,\mu_J>0$.
Then $y_1(u)=\sum_{I\in \MI_1: u\in I}\lambda_I$ and we have the identity
$$\sum_{I\in \MI_1:u\not\in I}\lambda_I \begin{pmatrix} \chi^I\cr 0\end{pmatrix}
+\sum_{I\in \MI_1: u\in I} \sum_{J\in \MI_2} \lambda_I\mu_J \begin{pmatrix} \chi^{I\setminus\{u\}}\cr \chi^J\end{pmatrix} =
\begin{pmatrix} y_1(\ou)\cr y_2\end{pmatrix}.$$ 
	All coefficients are nonnegative and their sum is
	$\sum_{I\in \MI_1:u\not\in I}\lambda_I +\sum_{I\in \MI_1: u\in I} \sum_{J\in \MI_2} \lambda_I\mu_J
	=1-y_1(u) + y_1(u)=1$.	
	Moreover, if $I$ is a stable set of $G_1$ with $u\in I$ and $J$ is a stable set of $G_2$, then the set $(I\setminus \{u\} )\cup J$ is stable in $G$.
So we have shown that the vector $(y_1(\ou),y_2)$ belongs to the stable set polytope of $G$.	
\end{proof}


\begin{remark}\label{rembetter}
One can    show a slightly tighter upper bound for $\xc(\stab(G))$ when $G$ is obtained by substituting  at a vertex of $G_1$ the graph  $G=K_p$ or $\overline {K_2}$.
Indeed, one can show that  $\xc(\stab(G)) \leq \xc(\stab(G_{1})) + p$ when $G_{2} = K_{p}$, and $\xc(\stab(G)) \leq \xc(\stab(G_{1})) + 3$ when $G_{2} = \overline{K_{2}}$, see \cite{thesisHao} for details.
 This is  a slight improvement over the result from Theorem \ref{subtheorem} which would, respectively, give  the bounds 
$\xc(\stab(G_1))+p+1$ and $\xc(\stab(G_1))+4$, using the fact that $\xc(\stab(K_p))=p+1$ and $\xc(\stab(\overline{K_2}))= 4$.
\end{remark}



We conclude with some applications of this bounding technique for graph substitution.

\begin{lemma}  \label{Uedgescomplement}
(i) If $G$ is the complete bipartite graph $K_{p,q}$ then $\xc(\stab(G))\le 2p+2q+3$.
 (ii) If $G$ is  the complement of the disjoint union of $p$ edges then
 $\xc(\stab(G)) \leq  4p+1.$ 
  \end{lemma}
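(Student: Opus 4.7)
My plan is to obtain both bounds by expressing each graph as the result of iterated graph substitution and then applying Theorem~\ref{subtheorem}, together with the sharper variant recorded in Remark~\ref{rembetter}.

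For part (i), I would realize $K_{p,q}$ by two substitutions starting from a single edge. Let $G_1=K_2$ with vertices $\{u_1,u_2\}$. Substituting $\overline{K_p}$ at $u_1$ replaces $u_1$ with $p$ pairwise non-adjacent vertices, each joined only to $u_2$, producing $K_{p,1}$. A further substitution of $\overline{K_q}$ at $u_2$ then yields $K_{p,q}$. Using $\xc(\stab(K_2))=3$ and $\xc(\stab(\overline{K_n}))=2n$ (both recalled earlier in the text), two applications of Theorem~\ref{subtheorem} give
\[
\xc(\stab(K_{p,q})) \;\le\; \xc(\stab(K_2)) + \xc(\stab(\overline{K_p})) + \xc(\stab(\overline{K_q})) \;=\; 3 + 2p + 2q.
\]

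For part (ii), let $G$ have vertex set $\{a_i,b_i : i\in[p]\}$ with the pairs $\{a_i,b_i\}$ ($i\in[p]$) the only non-edges. I would build $G$ from $K_p$ on vertices $\{u_1,\ldots,u_p\}$ by substituting $\overline{K_2}$ at each $u_i$ in turn: each such substitution replaces the current vertex by a non-adjacent pair while preserving adjacency to all the remaining clusters, and after $p$ steps the resulting graph is precisely $G$. Starting from $\xc(\stab(K_p))=p+1$ and invoking the refined estimate of Remark~\ref{rembetter}, which contributes only $+3$ per $\overline{K_2}$-substitution, I obtain after $p$ iterations
\[
\xc(\stab(G)) \;\le\; (p+1) + 3p \;=\; 4p+1.
\]

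The only subtle point is the systematic use of Remark~\ref{rembetter} in (ii): the unrefined bound from Theorem~\ref{subtheorem} would contribute $\xc(\stab(\overline{K_2}))=4$ at each step and yield only the weaker estimate $5p+1$. Everything else amounts to checking that the iterated substitutions do produce $K_{p,q}$ in (i) and the complement of a $p$-matching in (ii), which is routine.
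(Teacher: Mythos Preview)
Your proposal is correct and essentially identical to the paper's own proof: both parts build the target graph by iterated substitution (starting from $K_2$ for (i) and from $K_p$ for (ii)), apply Theorem~\ref{subtheorem} together with the refined estimate of Remark~\ref{rembetter} for the $\overline{K_2}$-substitutions in (ii), and use the known values $\xc(\stab(K_2))=3$, $\xc(\stab(\overline{K_n}))=2n$, $\xc(\stab(K_p))=p+1$.
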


\begin{proof}
(i) The complete bipartite graph  $G=K_{p,q}$ can be obtained by considering an edge $\{u,v\}$ for $G_1$ and successively substituting $\overline {K_p}$ at $u$ and $\overline {K_q}$ at $v$. Applying Theorem \ref{subtheorem} we obtain $\xc(\stab(G))\le \xc(\stab(\overline{K_p}))+\xc(\stab(\overline {K_q}))+\xc(\stab(K_2)) = 2p+2q+3.$\\
(ii) If $G$ is the complement of the union of $p$ edges  then $G$ can be obtained by successively substituting $\overline{K_2}$ at each vertex of the complete graph $K_p$. By Remark \ref{rembetter} we obtain that
$\xc(\stab(G))\le 	\xc(\stab(K_p))+ 3p= p+1+3p=4p+1.$
\end{proof}

Using Lemma \ref{Uedgescomplement}(ii) one can  sharpen the bound of Lemma \ref{lemdoublesplit} when $G$ is a double-split graph with parameters $(p,q,L_1,\cdots,L_p)$ and show 
$\xc(G)\le 5p+4q+3 \ (\le 5p+5q+2)$. 

As $K_{p,q}$ has $pq$ edges,  the bound from Lemma \ref{SGlemma3} is quadratic in the number of vertices while by Lemma \ref{Uedgescomplement}(i) the extension complexity of $\stab(K_{p,q})$ is linear in the number of vertices.

\subsection{2-Join decompositions}\label{CH_PP:2join}
Here we consider how the extension complexity of the stable set polytope behaves under  2-join decompositions.

\begin{theorem} \label{CH_PG:2jointheorem}
	Let $G$ be a perfect graph and let  $(V_{1},V_{2})$ be a partition of $V$ providing a 2-join decomposition  of $G$ as in Definition \ref{def2join}. Then we have 
	\[\xc(\stab(G)) \leq 3 \cdot \xc(\stab(G[V_{1}])) + 3\cdot \xc(\stab(G[V_{2}])).\]
\end{theorem}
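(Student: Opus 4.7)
My plan is to bound $\nrank(S_G)$ directly by partitioning the rows of the slack matrix $S_G$ of $\stab(G)$ into four blocks exploiting the 2-join structure. Following Lemma~\ref{lemsplit} and the paper's convention that $\MC_k$ denotes the set of maximal cliques of $G_k=G[V_k]$, I partition the rows of $S_G$ as $R_1\cup R_2\cup \MC_{12}$ where $R_k=V_k\cup\MC_k$. The 2-join hypothesis (Definition~\ref{def2join}) forces every mixed maximal clique $C\in\MC_{12}$ to be contained in $A_1\cup A_2$ or in $B_1\cup B_2$, since the only cross-edges of the 2-join are $A_1\times A_2$ and $B_1\times B_2$; this gives a disjoint split $\MC_{12}=\MC_{12}^A\cup\MC_{12}^B$, and moreover every $C\in\MC_{12}^A$ has the form $C=C_1\cup C_2$ with $C_i$ a maximal clique of $G_i[A_i]$. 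Applying Lemma~\ref{ranklemmaBlock} then yields
\[
\nrank(S_G)\le\nrank(R_1)+\nrank(R_2)+\nrank(\MC_{12}^A)+\nrank(\MC_{12}^B).
\]

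For each $k=1,2$, I plan to bound $\nrank(R_k)\le \xc(\stab(G_k))$. Every row of $R_k$ corresponds to a nonnegativity or clique constraint supported entirely in $V_k$, so its entry at a stable set $I\in\MI_G$ depends only on $I\cap V_k$. A column-duplication argument (columns $I\in\MI_G$ sharing the same $I\cap V_k$ coincide on $R_k$) identifies the block $R_k$ with a column-copied version of a row-submatrix of the slack matrix $S_{G_k}$, giving $\nrank(R_k)\le\nrank(S_{G_k})=\xc(\stab(G_k))$.

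The main step, which I expect to be the principal obstacle, is to show $\nrank(\MC_{12}^A)\le \xc(\stab(G_1))+\xc(\stab(G_2))$; the case of $\MC_{12}^B$ is analogous. For $C=C_1\cup C_2\in\MC_{12}^A$ and $I\in\MI_G$, the slack is $S_G(C,I)=1-|C_1\cap I|-|C_2\cap I|$, which a priori couples $I\cap V_1$ and $I\cap V_2$. My key idea is to partition the columns of this block into those $I$ with $I\cap A_1=\emptyset$ and those with $I\cap A_1\ne\emptyset$. In the first group $|C_1\cap I|=0$, so the entry collapses to $1-|C_2\cap I|$, depending only on $(C_2,I\cap V_2)$; in the second group the 2-join A-constraint forces $I\cap A_2=\emptyset$, hence $|C_2\cap I|=0$ and the entry collapses to $1-|C_1\cap I|$, depending only on $(C_1,I\cap V_1)$. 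In each group, row-duplication (over $C_1$ or $C_2$) and column-duplication (over $I\cap V_1$ or $I\cap V_2$) identifies the submatrix with a sub-slack-matrix of $\stab(G_2)$ (resp.\ $\stab(G_1)$) whose rows are slacks of valid clique constraints for cliques in $A_2$ (resp.\ $A_1$); since augmenting a slack matrix with valid-constraint rows does not change its nonnegative rank (by Theorem~\ref{FactorizationTheorem}), the two groups contribute at most $\xc(\stab(G_2))$ and $\xc(\stab(G_1))$ respectively. Lemma~\ref{ranklemmaBlock}(ii) then yields $\nrank(\MC_{12}^A)\le\xc(\stab(G_1))+\xc(\stab(G_2))$.

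Summing the four contributions gives $\nrank(S_G)\le 3\xc(\stab(G_1))+3\xc(\stab(G_2))$, which is the claimed bound via Theorem~\ref{FactorizationTheorem}. The principal difficulty lies in the mixed-clique block, whose slacks naively couple $V_1$- and $V_2$-information; the crucial structural observation is that the 2-join provides, for each stable set $I$, a dichotomy $I\cap A_1=\emptyset$ vs.\ $I\cap A_2=\emptyset$, so a single column-split linearizes the dependence on either side and reduces the mixed-clique contribution to the claimed bound.
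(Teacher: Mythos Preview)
Your proposal is correct and follows essentially the same approach as the paper: partition the rows of $S_G$ into $\mathcal{R}_1$, $\mathcal{R}_2$, and the mixed-clique blocks $\mathcal{C}_A$, $\mathcal{C}_B$, and use the 2-join dichotomy (a stable set cannot meet both $A_1$ and $A_2$) to show each mixed-clique block contributes at most $\xc(\stab(G_1))+\xc(\stab(G_2))$. The paper phrases the column side via a finer six-way partition of $\mathcal{I}_G$ (according to which of $A_1,B_1,D_1,A_2,B_2,D_2$ the stable set meets), whereas you use the coarser two-way split $I\cap A_1=\emptyset$ versus $I\cap A_1\ne\emptyset$; both encode the same dichotomy, and your version is arguably cleaner. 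One minor point: you are explicit that the cliques $C_1\subseteq A_1$, $C_2\subseteq A_2$ need not be maximal in $G_1$, $G_2$, and you correctly invoke Theorem~\ref{FactorizationTheorem} to justify that adding their (valid) clique constraints as extra rows does not change the nonnegative rank; the paper glosses over this by simply calling the relevant blocks ``submatrices'' of $S_{G_k}$.
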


\begin{proof}
 As $G$ is perfect we need to show
$\nrank (S_G)\le 3\cdot \nrank (S_{G_1})+3\cdot \nrank (S_{G_2})$. For this we examine the block structure of the slack matrix $S_G$ from (\ref{eqSG}).  
As we have no control on the size of the set $\MC_{12}$ of maximal mixed cliques, we examine in more detail how the mixed cliques and independent sets arise.
For $k=1,2$,  let $A_k,B_k$ be the subsets of $V_k$ as in Definition \ref{def2join} and set $D_k=V_k\setminus (A_k\cup B_k)$.  

Any mixed maximal clique is of the form $C=C_1\cup C_2$ where, either $C_1\subseteq A_1$ and $C_2\subseteq A_2$
(call $\MC_A$ the set of such maximal cliques), or $C_1\subseteq B_1$ and $C_2\subseteq B_2$ (call $\MC_B$ the set of such maximal cliques), so that 
$\MC_{12}=\MC_A\cup \MC_B$.
One can verify that $\MI_{12}=\MI_3\cup\MI_4\cup \MI_5\cup \MI_6$, where
$\MI_3$ (resp., $\MI_4$, $\MI_5$, $\MI_6$) contains the independent sets of the form $I\cup J$ with
$I\subseteq D_1$ and $J\subseteq V_2$ 
(resp., with $I\subseteq D_1\cup A_1$ and $J\subseteq D_2\cup B_2$, $I\subseteq D_1\cup B_1$ and $J\subseteq D_2\cup A_2$, 
$I\subseteq V_1$ and $J\subseteq D_2$). Recall that $\MMR_k=V_k\cup\MC_k$ for $k=1,2$.
With respect to these partitions of its row and column index sets the matrix $S_G$ has the block form:

\[S_{G}  = 
\bordermatrix{ & \mathcal{I}_{1} & \mathcal{I}_{2} & \mathcal{I}_{3} & \mathcal{I}_{4} & \mathcal{I}_{5} & \mathcal{I}_{6} \cr
\mathcal{R}_{1}	& S_{1,1} &	S_{1,2}  &	S_{1,3}	& S_{1,4} & S_{1,5}  & S_{1,6} \cr
\mathcal{R}_{2} & S_{2,1} & S_{2,2}  &  S_{2,3} & S_{2,4} & S_{2,5}  & S_{2,6} \cr
\mathcal{C}_{A}& S_{3,1} &	S_{3,2}	 &	S_{3,3}	& S_{3,4} & S_{3,5}  & S_{3,6} \cr
\mathcal{C}_{B}& S_{4,1} &	S_{4,2}	 &	S_{4,3}	& S_{4,4} & S_{4,5}  & S_{4,6} \cr }
\]

To conclude the proof it suffices to make the following observations.
For $k=1,2$, we have $S_{k,k}=S_{G_k}$, 	each column of $S_{k,3}, S_{k,4}, S_{k,5}, S_{k,6}$ is copy of a column of $S_{k,k}$, and each column of $S_{1,2}$ (resp., $S_{2,1}$) coincides with the column of $S_{1,1}$ (resp., $S_{2,2}$) indexed by the empty set.
Moreover, for $k=3,4$, $S_{k,1}$ is a submatrix of $S_{G_1}$, $S_{k,2}$ is a submatrix of $S_{G_2}$, and each column of $S_{k,3}, S_{k,4}, S_{k,5}, S_{k,6}$ is copy of a column of $S_{k,1}$ or $S_{k,2}$.
Combining these observations with Lemma \ref{ranklemmaBlock} gives the desired inequality.
\end{proof}
\subsection{Skew partitions}\label{CH_PP:skewpartition}

We examine now the behaviour of the extension complexity under skew partitions.

\begin{theorem} \label{CH_PG:skewpartitiontheorem}
	Let $G=(V,E)$ be a perfect graph and let $(A_{1},B_{1},A_{2},B_{2})$ be a  partition of $V$ providing a skew partition decomposition of $G$ as in Definition \ref{skew}.  Then we have
	\begin{align*}
	\xc(\stab(G)) & \leq  2 \cdot \xc(\stab(G[A_{1}\cup B_{1}])) + 2\cdot \xc(\stab(G[A_{2}\cup B_{2}])) \\
	& + \xc(\stab(G[A_{1}\cup B_{2}])) + \xc(\stab(G[A_{2}\cup B_{1}])).
	\end{align*}
\end{theorem}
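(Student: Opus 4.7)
The plan is to mirror the slack-matrix block-decomposition strategy used in the proof of Theorem~\ref{CH_PG:2jointheorem}. Since $G$ is perfect, Theorem~\ref{FactorizationTheorem} gives $\xc(\stab(G))=\nrank(S_G)$, so it suffices to partition $S_G$ into a small number of blocks, upper bound the nonnegative rank of each block by the extension complexity of one of the four induced subgraphs $G[A_i\cup B_j]$, and then apply Lemma~\ref{ranklemmaBlock}.

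I begin by partitioning the columns and rows of $S_G$ using the two defining properties of a skew partition. Since $A_1,A_2$ are fully joined, every stable set $I$ of $G$ satisfies $I\cap A_1=\emptyset$ or $I\cap A_2=\emptyset$; accordingly $\MI_G=\MI^{1}\sqcup\MI^{2}$, with $\MI^{1}=\{I\in\MI_G:I\cap A_2=\emptyset\}$ and $\MI^{2}=\{I\in\MI_G:I\cap A_1=\emptyset,\ I\cap A_2\ne\emptyset\}$. Dually, since $B_1,B_2$ are non-adjacent, every maximal clique $C$ of $G$ satisfies $C\cap B_1=\emptyset$ or $C\cap B_2=\emptyset$. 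I partition the row set $V\cup\MC_G$ of $S_G$ as $\mathcal{R}^{1}\sqcup\mathcal{R}^{2}\sqcup\MC^{A}$, where $\mathcal{R}^{k}=(A_k\cup B_k)\cup\MC^{k}$, $\MC^{k}=\{C\in\MC_G:C\cap B_k\ne\emptyset\}$, and $\MC^{A}$ is the set of maximal cliques of $G$ contained in $A_1\cup A_2$.

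The heart of the argument is then to bound each of the six resulting blocks in terms of the four subgraph slack matrices. The key observation is that for $C\in\MC^{k}$ and $I\in\MI^{j}$, one has $C\subseteq A\cup B_k$ and $I\subseteq A_j\cup B$, which together force $C\cap I\subseteq (A\cup B_k)\cap(A_j\cup B)=A_j\cup B_k$. Therefore the entry $S_G(C,I)=1-|C\cap I|$ depends only on $C^{(jk)}:=C\cap(A_j\cup B_k)$ and $I^{(jk)}:=I\cap(A_j\cup B_k)$, which are respectively a clique and a stable set of the (perfect) induced subgraph $G[A_j\cup B_k]$. Likewise, for a vertex row $v\in A_k\cup B_k$ and column $I\in\MI^{j}$, the entry $S_G(v,I)=[v\in I]$ either vanishes (when $v\in A_k$ with $k\ne j$) or equals $[v\in I^{(jk)}]$. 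Hence each block $S_G[\mathcal{R}^{k},\MI^{j}]$ is, up to repeated rows, repeated columns, and some zero rows, a submatrix of the slack matrix of $\stab(G[A_j\cup B_k])$, so its nonnegative rank is at most $\xc(\stab(G[A_j\cup B_k]))$.

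The extra factor of~$2$ on $\xc(\stab(G[A_1\cup B_1]))$ and $\xc(\stab(G[A_2\cup B_2]))$ in the stated bound comes from the $\MC^{A}$ rows: for $C\in\MC^{A}$ and $I\in\MI^{j}$, $C\subseteq A$ and $I\cap A\subseteq A_j$ give $C\cap I=(C\cap A_j)\cap(I\cap A_j)$, so the block $S_G[\MC^{A},\MI^{j}]$ is (up to duplication) a submatrix of a slack matrix of $\stab(G[A_j])$, and since $G[A_j]$ is an induced subgraph of $G[A_j\cup B_j]$ we have $\xc(\stab(G[A_j]))\le\xc(\stab(G[A_j\cup B_j]))$ by intersecting an extension with the face $\{x_v=0:v\in B_j\}$ and projecting. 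Summing the six block bounds via Lemma~\ref{ranklemmaBlock} then yields the claimed inequality. The main technical point I expect to need care is the case-by-case verification that each block indeed factors through the intended subgraph, including the mild subtlety that a clique such as $C^{(jk)}$ may fail to be maximal in $G[A_j\cup B_k]$; this is harmless because by Theorem~\ref{FactorizationTheorem} any slack matrix of $\stab(G[A_j\cup B_k])$ built from a linear description of the polytope (in particular, one using all clique inequalities, not only the facet-defining ones) has nonnegative rank equal to $\xc(\stab(G[A_j\cup B_k]))$.
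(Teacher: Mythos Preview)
Your proof is correct and follows the same slack-matrix block-decomposition strategy as the paper. The only notable difference is the choice of block structure: the paper uses the partition $V=V_1\cup V_2$ with $V_k=A_k\cup B_k$ to get a $4\times 4$ grid (rows $\mathcal{R}_1,\mathcal{R}_2,\mathcal{C}_3,\mathcal{C}_4$; columns $\mathcal{I}_1,\mathcal{I}_2,\mathcal{I}_3,\mathcal{I}_4$) and bounds each block directly as a submatrix of one of the four $S_{G[A_i\cup B_j]}$, whereas you use a coarser $3\times 2$ grid (splitting columns by which $A_j$ the stable set avoids, and rows by which $B_k$ the clique meets). Your partition is arguably more natural for the skew-partition structure and needs fewer blocks, but it comes at the small cost of the extra monotonicity step $\xc(\stab(G[A_j]))\le\xc(\stab(G[A_j\cup B_j]))$ for the $\mathcal{C}^A$ rows, which the paper's finer partition avoids. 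Both routes give the same bound with the same amount of work.
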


\begin{proof}
It suffices to  show that $\nrank(S_G)$ is at most 
$$  2 \nrank(S_{G[A_1\cup B_1]}) +2 \nrank(S_{G[A_2\cup B_2]}) +\nrank(S_{G[A_1\cup B_2]}) +\nrank(S_{G[A_2\cup B_1]}).$$
For this we exploit the block structure of $S_G$ in (\ref{eqSG}), using the partition $V=V_1\cup V_2$ with  $V_k=A_k\cup B_k$ for $k=1,2$.
The mixed maximal cliques of $G$
are of the form
$C_1\cup C_2$,   either with $C_1\subseteq A_1\cup B_1$ and $C_2\subseteq A_2$ (call their set $\MC_3$), or 
with $C_1\subseteq A_1$ and $C_2\subseteq A_2\cup B_2$ (call their set $\MC_4$).
The mixed  independent sets of $G$ 
are of the form  $I_1\cup I_2$, either  with $I_1\subseteq A_1\cup B_1$ and $I_2\subseteq B_2$
(call their set $\MI_3$), or 
with  $I_1\subseteq B_1$ and $I_2\subseteq A_2\cup B_2$ (call their set $\MI_4$).
With respect to these partitions of its row and column index sets, the slack matrix $S_G$ has the block form:

\[S_{G}  = 
\bordermatrix{ 	& \mathcal{I}_{1} & \mathcal{I}_{2} & \mathcal{I}_{3} & \mathcal{I}_{4}  \cr
\mathcal{R}_{1}	& S_{1,1} &	S_{1,2}  &	S_{1,3}	& S_{1,4} \cr
\mathcal{R}_{2} & S_{2,1} & S_{2,2}  &  S_{2,3} & S_{2,4} \cr
\mathcal{C}_{3} & S_{3,1} &	S_{3,2}	 &	S_{3,3}	& S_{3,4} \cr
\mathcal{C}_{4} & S_{4,1} &	S_{4,2}	 &	S_{4,3}	& S_{4,4} \cr  }
\]

As in earlier proofs, we have $\nrank(S_{k,1}\ S_{k,2}\ S_{k,3} \ S_{k,4})  \le  \nrank(S_{G[A_k\cup B_k]})$ for $k=1,2$.
	Moreover, by looking at the shape of the mixed cliques and independent sets one can make the following observations: 
	$\nrank( S_{3,1}\ S_{3,3}) \le \nrank(S_{G[A_1\cup B_1]})$ since $S_{3,1}=S_{3,3}$ is a submatrix of $S_{G[A_1\cup B_1]}$,
	$\nrank(S_{3,2}\ S_{3,4})\le \nrank(S_{G[A_2\cup B_1]})$ since each column of $S_{3,2}$ is copy of a column of $S_{3,4}$ which in turn is a submatrix of $S_{G[A_2\cup B_1]}$,
	$\nrank(S_{4,2}\ S_{4,4})\le \nrank (S_{G[A_2\cup B_2]})$ since $S_{4,2}=S_{4,4}$ is a submatrix of $S_{G[A_2\cup B_2]}$,
	and 
	$\nrank(S_{4,1}\ S_{4,3})\le \nrank (S_{G[A_1\cup B_2]})$ since each column of $S_{4,1}$ is a copy of a column of $S_{4,3}$ which in turn is a submatrix of $S_{G[A_1\cup B_2]}$.
\end{proof}

\section{Application to perfect graphs}\label{secappli}

We now use the  above results  to  upper bound the extension complexity of the stable set polytope of a perfect graph $G$ by decomposing $G$  into basic perfect graphs using  2-join  and skew partition  decompositions.  Recall that basic perfect graphs are  bipartite graphs or their complements, line graphs of bipartite graphs or their complements, and  double-split graphs for which we know that $\xc(\stab(G))\le 2(|V|+|E|)$ (by Corollary \ref{corbasic}).

\begin{theorem}\label{CH_PG:2joinEX}
	Let  $G=(V,E)$ be  a perfect graph.
	Let $d$ be the depth of a decomposition tree representing a decomposition of $G$ into basic perfect graphs by means of 2-join and skew partition decompositions. Then we have
	\[ \xc(\stab(G)) \leq 4^{d}  (2|V|+2|E|).
	\]
\end{theorem}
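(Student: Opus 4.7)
The plan is to proceed by induction on the depth $d$ of the decomposition tree. For the base case $d=0$, the root graph $G$ is itself a basic perfect graph, so Corollary~\ref{corbasic} immediately yields $\xc(\stab(G))\le 2(|V|+|E|)=4^{0}(2|V|+2|E|)$.

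For the inductive step with $d\ge 1$, by Theorem~\ref{perfectDecomposition} one of three operations is applied at the root. I first handle the case where $G$ admits a 2-join $(V_{1},V_{2})$. Theorem~\ref{CH_PG:2jointheorem} gives $\xc(\stab(G))\le 3\xc(\stab(G[V_{1}]))+3\xc(\stab(G[V_{2}]))$, and each $G[V_{k}]$ inherits a sub-decomposition tree of depth at most $d-1$, so by induction $\xc(\stab(G[V_{k}]))\le 4^{d-1}\cdot 2(|V_{k}|+|E_{k}|)$ with $E_{k}=E(G[V_{k}])$. Since $V_{1}, V_{2}$ partition $V$ one has $|V_{1}|+|V_{2}|=|V|$, and since $E_{1},E_{2}$ are disjoint subsets of $E$ one has $|E_{1}|+|E_{2}|\le|E|$; summing gives $\xc(\stab(G))\le 3\cdot 4^{d-1}\cdot 2(|V|+|E|)\le 4^{d}(2|V|+2|E|)$.

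Next I handle the case of a skew partition $(A_{1},B_{1},A_{2},B_{2})$. Writing $H_{1}=G[A_{1}\cup B_{1}]$, $H_{2}=G[A_{2}\cup B_{2}]$, $H_{3}=G[A_{1}\cup B_{2}]$, $H_{4}=G[A_{2}\cup B_{1}]$ with coefficients $c_{1}=c_{2}=2$, $c_{3}=c_{4}=1$, Theorem~\ref{CH_PG:skewpartitiontheorem} combined with the induction hypothesis gives $\xc(\stab(G))\le \sum_{k=1}^{4} c_{k}\cdot 4^{d-1}\cdot 2(|V(H_{k})|+|E(H_{k})|)$. The main book-keeping step is to verify $\sum_{k}c_{k}|V(H_{k})|=3|V|$ (each of the four parts $A_{1},B_{1},A_{2},B_{2}$ appears in exactly two $H_{k}$'s with total weight $2+1=3$) and $\sum_{k}c_{k}|E(H_{k})|\le 3|E|$: edges inside a single part $A_{i}$ or $B_{i}$ get total weight $3$, edges of type $A_{i}$--$B_{i}$ get weight $2$, edges of type $A_{i}$--$B_{j}$ with $i\ne j$ get weight $1$, edges between $A_{1}$ and $A_{2}$ (of which there may be many, since $A_{1}$ is complete to $A_{2}$) get weight $0$, and no edges exist between $B_{1}$ and $B_{2}$ by definition. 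Thus again $\xc(\stab(G))\le 3\cdot 4^{d-1}\cdot 2(|V|+|E|)\le 4^{d}(2|V|+2|E|)$.

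The main obstacle will be the remaining case where $\overline{G}$ (and not $G$) admits the 2-join: Theorem~\ref{CH_PG:2jointheorem} applied to $\overline{G}$ produces a bound in terms of $\xc(\stab(\overline{G[V_{k}]}))$, whose natural inductive control involves $|E(\overline{G[V_{k}]})|$ rather than $|E(G[V_{k}])|$. I would resolve this by coupling Theorem~\ref{CH_PG:2jointheorem} (applied to $\overline{G}$) with Lemma~\ref{lemGbar}, which gives $\xc(\stab(G))\le\xc(\stab(\overline{G}))+|V|$, interpreting the children of $G$ in the decomposition tree in this case as $\overline{G[V_{1}]}$ and $\overline{G[V_{2}]}$ so that a complement-switch is absorbed into one tree-level. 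The additive $|V|$ from Lemma~\ref{lemGbar} and the discrepancy between $|E(G)|$ and $|E(\overline{G})|$ must then be absorbed into the slack between $3\cdot 4^{d-1}$ and $4^{d}$; making this absorption watertight (perhaps by strengthening the inductive hypothesis so it controls $\xc(\stab(H))$ and $\xc(\stab(\overline{H}))$ simultaneously) is the key technical point of the argument.
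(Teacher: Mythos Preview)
Your base case and your two inductive cases (2-join of $G$, skew partition of $G$) are correct and match the paper's proof almost verbatim. Your skew-partition bookkeeping is in fact slightly sharper than the paper's: the paper simply bounds all four coefficients by $2$, uses $\sum_{k}|V(\hat G_{k})|=2|V|$ and $\sum_{k}|E(\hat G_{k})|\le 2|E|$, and lands on $2\cdot 4^{d-1}\cdot 2\cdot 2(|V|+|E|)=4^{d}(2|V|+2|E|)$; your weighted count with $c_{1}=c_{2}=2$, $c_{3}=c_{4}=1$ gives the tighter $3\cdot 4^{d-1}(2|V|+2|E|)$, but both are $\le 4^{d}(2|V|+2|E|)$.

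The ``main obstacle'' you identify, however, is a phantom. You invoke Theorem~\ref{perfectDecomposition} at the root to decide which operation is applied, and this leads you to worry about the case where only $\overline{G}$ admits a 2-join. But the theorem you are proving is \emph{conditional}: a decomposition tree of $G$ ``by means of 2-join and skew partition decompositions'' is part of the hypothesis, and the depth $d$ refers to that given tree. At every internal node the operation is, by assumption, either a 2-join of the graph at that node or a skew partition of it; a 2-join of the complement is simply not one of the allowed operations in this tree. So there are only the two cases you already handled, and no complement-switching argument via Lemma~\ref{lemGbar} is needed. (Whether such a tree exists, and what its depth is, for an arbitrary perfect graph is a separate question that the theorem does not address.) Once you drop the spurious third case, your proof is complete and essentially identical to the paper's.
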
	
\begin{proof}
We use induction on the depth $d\ge 0$ of the decomposition tree. If $d=0$ then $G$ is a basic perfect graph and the result holds  by Corollary \ref{corbasic}.
Assume now $d\ge 1$, i.e., $G$ admits a 2-join or skew partition decomposition. We first consider the case when $G$ admits a 2-join decomposition $(V_1,V_2)$. Then, by Theorem \ref{CH_PG:2jointheorem}, we have
	$$\xc(\stab(G)) \leq 3 \cdot \xc(\stab(G[V_1])) + 3\cdot \xc(\stab(G[V_2])).$$ 
	By the induction assumption, we have 
	$\xc(\stab(G[V_k]))\le 4^{d-1}(2|V_k|+2|E_k|)$ for each $k=1,2$.
	As $|V|=|V_1|+|V_2|$ and $|E_1|+|E_2|\le |E|$,  we obtain the desired bound:
	$\xc(\stab(G))\le  3\cdot 4^{d-1}(2|V_1|+2|E_1|+2|V_2|+2|E_2|)\le 4^{d}(2|V|+2|E|)$.
	
	Consider now the case when $G$ admits a skew partition $(A_1,B_1,A_2,B_2)$.
We use Theorem $\ref{CH_PG:skewpartitiontheorem}$ which implies that 
	\[\xc(\stab(G)) \le 2 \cdot ( \xc(\hat{G}_1)+\xc(\hat{G}_2)+\xc(\hat{G}_3)+\xc(\hat{G}_3)),\]
	where $\hat{G}_1, \hat{G}_2,\hat{G}_3,\hat{G}_4$ are induced subgraphs of $G$ such that 
	$\sum_{k=1}^4 |V(\hat G_{k})|= 2|V|$ and $\sum_{k=1}^4|E(\hat G_k)|\le 2|E|$.
By the induction assumption,  for each $k=1,2,3,4$ we have: 
$\xc(\stab(\hat G_k))\le 4^{d-1} (2|V(\hat G_k)|+2|E(\hat G_k)|)$ .
Combining with the above relations we obtain the desired inequality:
$$\xc(\stab(G))\le 2\cdot 4^{d-1}\sum_{k=1}^4(2|V(\hat G_k)|+2|E(\hat G_k)|) \le  4^{d}  (2 |V|+2|E|).$$
\end{proof}

As $2|V|+2|E|\le 2|V|^2$, we derive the bound $\xc(\stab(G))\le 2\cdot 4^{d}|V|^2$ when $G$ has a decomposition tree of depth $d$. In particular, for the class of perfect graphs  $G$ admitting a decomposition tree whose depth $d$ is logarithmic in $|V|$, say
$d\le c\log |V|$ for some constant $c>0$,  the extension complexity of the stable set polytope is polynomial in $V$:
$$\xc(\stab(G))\le 2 |V|^{c+2}.$$

\medskip
To conclude let us remark that  other graph operations are known that preserve perfect graphs and  can be used to give structural characterizations for subclasses of perfect graphs.
This is the case in particular for the ``graph amalgam" operation considered in \cite{burlet1984polynomial}.
The behaviour of the amalgam operation is studied in \cite{conforti2013stable} (see also \cite{thesisHao}):
if $G$ is the amalgam of two perfect graphs $G_1$ and $G_2$ then $\xc(\stab(G))\le \xc(\stab(G_1))+\xc(\stab(G_2))$.
Burlet and Fonlupt \cite{burlet1984polynomial} introduce a notion of {\em basic} Meyniel graph and show that any 
 Meyniel graph can be decomposed  into {basic} Meyniel graphs using graph amalgams. It follows from results in Conforti et al.\,\cite{conforti2013stable} 
  that the extension complexity of the stable set polytope is polynomial  in the number of vertices for Meyniel graphs.
The question of deciding wether the extension complexity of the stable set polytope  is polynomial for all perfect graphs remains wide open.
  
  \bigskip
  {\bf Acknowledgments.}
  We thank Ronald de Wolf for useful discussions and comments about the topic of this paper.

\end{document}